\newcommand{\code}[1]{\texttt{#1}}
\crefname{hypothesis}{Hypothesis}{Hypotheses}
\title{How to find all connections in the Pantelides algorithm for delay differential-algebraic equations\thanks{Submitted February 3, 2022.}}
\author{Daniel Collin\thanks{Technische Universität Berlin, Berlin, Germany 
  (\email{daniel.collin@web.de}).}}
\definecolor{verylightgray}{gray}{0.30}
\tikzstyle{vertex}=[circle, draw, inner sep=0pt, minimum size=6pt]
\newcommand{\vertex}{\node[vertex]}
\newcommand{\enode}[3][]{
	\ifthenelse{\equal{\detokenize{#1}}{\detokenize{try}}}
	{\def\opt{fill}}
	{	
		\def\opt{}	
	}
	\vertex[\opt] (de#2) at (0,10-#2) [label=left:#3] {};
}
\newcommand{\vnode}[4][black]{
	\ifthenelse{\equal{\detokenize{#1}}{\detokenize{low}}}
	{\def\difcolor{verylightgray}}
	{	
		\def\difcolor{black}	
	}
	
	\vertex (dv#2) at (2,10-#2) [label=right:\textcolor{\difcolor}{#3}] {};
	\node (n#2) at (3,10-#2) [label=right:#4] {};
}
\newcommand{\edge}[3][black]{
	\ifthenelse{\equal{\detokenize{#1}}{\detokenize{assign}}}
	{ \def\difColor{blue} \def\type{solid} \def\thickness{very thick}}
	{
		\ifthenelse{\equal{\detokenize{#1}}{\detokenize{low}}}
		{ \def\difColor{verylightgray} \def\type{dashed} \def\thickness{very thin}}
		{
			\ifthenelse{\equal{\detokenize{#1}}{\detokenize{con}}}
	        { \def\difColor{red} \def\type{solid} \def\thickness{very thick}}
	        {
			    \def\difColor{black} \def\type{solid}	\def\thickness{thin}
		    }
		}
	}			
	\path
	(d#2) edge[color = \difColor, \type, \thickness] (d#3);
}
\begin{document}

\maketitle

\begin{abstract}
  The Pantelides algorithm for delay differen\-tial-algebraic equations (DDAEs) is a method to structurally analyse such systems with the goal to detect which equations have to be differentiated or shifted to construct a solution. In this process, one has to detect implicit connections between equations in the shifting graph, making it necessary to check all possible connections. The problem of finding these efficiently remained unsolved so far. It is explored in further detail and a reformulation is introduced. Additionally, an algorithmic approach for its solution is presented.
\end{abstract}

\begin{keywords}
  delay differential-algebraic equation, Pantelides algorithm, structural analysis, enumeration algorithm, spanning tree
\end{keywords}

\begin{AMS}
  05C30, 34A09, 34K32, 65L80
\end{AMS}

\section{Introduction}

Delay differential-algebraic equations (DDAEs) are a class of differential equations for some function $x(t)$ on a time interval $[0,T)\subseteq\mathbb{R}$, $T>0$, that, in their simplest form, not only depend on the time derivative $\dot x(t)$ but also on a previous time state $\Delta_{-\tau} x(t):=x(t-\tau)$ with a delay $\tau>0$. Additionally, the system may possess algebraic constraints such that it can only be formulated in implicit form. Here, the DDAE is assumed to be a system of $n\in\mathbb{N}$ equations~${F=(F_1,...,F_n)}$ and variables~${x=(x_1,...,x_n)}$. Thus, consider a DDAE of the form
\begin{equation}
    \label{eq:ddae}
    F(t,x(t),\dot x(t), \Delta_{-\tau} x(t))=0,
\end{equation}
where 
\begin{equation*}
    x:[-\tau,T) \to \mathbb{R}^n \quad \text{and} \quad F:[0,T)\times \mathbb{D}_{x}\times \mathbb{D}_{\dot x}\times \mathbb{D}_{\Delta x} \to \mathbb{R}^n,
\end{equation*}
with $\mathbb{D}_{x},\mathbb{D}_{\dot x},\mathbb{D}_{\Delta x}\subseteq \mathbb{R}^n$ being open. Here, $\dot{x}$ denotes the derivative of $x$ with respect to $t$ from the right.
To obtain an initial value problem, \cref{eq:ddae} has to be equipped with an initial condition
\begin{equation}
    \label{eq:init}
    x(t)=\phi(t) \quad \text{for} \quad t\in[-\tau,0].
\end{equation}

Equations of this form arise in many applications, such as multibody control systems, electric circuits or fluid dynamics (see \cite{ph,unger20b}). They combine features of delay differential equations (DDEs) and differential-algebraic equations (DAEs), which makes them particularly difficult to solve. 
Solutions may depend on derivatives of $F$ and on evaluations of $F$ at future time points (see \cite{campbell95,phi12}). Therefore, the interplay of the differentiation operator~$\tfrac{\mathrm{d}}{\mathrm{d}t}$ and the shift operator $\Delta_{-\tau}$ has to be treated carefully (cf. \cite{phi14}) and solutions have to be constructed by differentiating and shifting equations (cf. \cite{campbell95,phi12,phi14,phi16,unger18,trenn19}). Even for linear DDAEs, general existence and uniqueness results can only be obtained using a distributional solution concept (see \cite{trenn19,unger20b}) or imposing further restrictions on the DDAE (see \cite{phi12,phi16,unger18}) or its initial function in \cref{eq:init} (see \cite{phi18,unger18}). For nonlinear DDAEs like \cref{eq:ddae}, solutions can be established for certain classes (cf. \cite{ascher95,unger20}).

In most cases, the construction of a solution for a DDAE involves the method of steps. This means that the equation is successively integrated over the time intervals~${\left[i\tau,(i+1)\tau\right)}$, $i\in\mathbb{N}$. By substituting the delayed variables with the already computed solution of the previous interval, the problem can be reduced to solve a DAE in each step (cf. \cite{campbell80,bellen03,phi16}). However, this method does not always succeed and a reformulation of \cref{eq:ddae} is required such that the DAE that has to be solved in each interval is regular and has a small index. Hereby, the index is, roughly speaking, a measure how often parts of the DAE have to be differentiated to reformulate the DAE as an ordinary differential equation. This reformulation can be done by a compress-and-shift algorithm (see \cite{campbell95,trenn19}) or by a combined shift and derivative array (see \cite{phi16}). The differentiation and  shifting of certain equations is necessary in both cases.

Determining which equations one has to differentiate or shift is therefore a central aspect of the solution process of a DDAE. In \cite{ahrens20}, the Pantelides algorithm for delay differential-algebraic equations is presented as a tool to exploit the structure of (\ref{eq:ddae}), i.e., the information which variable appears in which equation, to determine the number of differentiations and shifts necessary to solve the DDAE. 
It is based on the Pantelides algorithm for DAEs (see \cite{pantelides88}) and will be simply referred to as Pantelides algorithm from now on. The approach consists of defining different bipartite graphs, where each equation and certain equivalence classes of variables are represented by the nodes. Edges exist between equation nodes and variable nodes if and only if one of the variables of the equivalence class appears in that equation. In other words, the graphs represent the structure of the DDAE. Then, matchings between equation nodes and variable nodes of highest shift and differentiation order are constructed in these graphs. This is achieved by following a specific pattern of shifting and differentiating equations and the variables belonging to it. At the end of the process, each equation can be resolved for a variable of highest shift and differentiation order. For more details on the whole procedure and the algorithm, see the original paper \cite{ahrens20}.

In this work, the focus is put on a specific subproblem that appears in the Pantelides algorithm. During the first part of the algorithm, called the shifting step, one has to shift equations that are connected to each other in a certain way through edges in a specific graph, called the shifting graph. Generally, such a connection is not unique and all possible connections have to be found to allow for correct shifting. The identification of all of these connections, however, may be computationally very expensive, and no efficient algorithmic solution is known so far. 

The contribution of this paper is a deeper exploration of this problem. First, it is explained in further detail and all important preliminaries are given in \cref{sec:2}. Then, a new solution approach is proposed based on the reformulation to a known enumeration problem from graph theory. The equivalence of both problems is proven (\cref{sec:3}). Additionally, an algorithm from \cite{gabow78} for the solution of the enumeration problem is presented (\cref{sec:4}). This algorithm is applied to the original problem, yielding a method for finding all connections in the Pantelides algorithm, and two detailed examples of its usage are given (\cref{sec:5}). Finally, a numerical demonstration of the advantageous properties of the new method is shown (\cref{sec:6}) and the paper is concluded with a summary and some final remarks (\cref{sec:7}).
\bigbreak
\textbf{Notation:}
The natural numbers, the non-negative integers, and the reals are denoted by $\mathbb{N}$, $\mathbb{N}_0$ and $\mathbb{R}$, respectively. For a differentiable function $f: \mathbb{I}\to\mathbb{R}^n$, the notation~${\dot f:=\tfrac{\mathrm{d}}{\mathrm{d}t}f}$ is used to denote the derivative with respect to the (time) variable $t$ and $\ddot f:=\frac{d}{dt}\dot f$ for the second derivative. For higher derivatives of order $q\in\mathbb{N}_0$, the abbreviation $f^{(q)}$ is used. Similarly, the shift operator~$\Delta_\tau$ is defined as ${\Delta_\tau f(t) = f(t+\tau)}$. The union of two sets $A$ and $B$ is denoted by $A \cup B$. A disjoint union of sets is written as~${A\dot\cup B}$. The cardinality of the set $A$ is denoted by $|A|$.

\section{Problem description}
\label{sec:2}

This section describes the overall problem of the paper in detail and introduces the most important definitions to give all preliminaries needed to understand the solution approach. Since this paper can be seen as an extension of \cite{ahrens20}, most information is based on that work and all derivations can be found there.

The Pantelides algorithm translates the structural information of the DDAE into graphs. First, the \textit{shifting graph} $G^S$ is constructed by combining all variables of the same index $k$ (for each $k=1,...,n$) and shift order~${p\in\mathbb{N}_0\cup\{-1\}}$ (but possibly different differentiation order) into the same equivalence class, i.e.,
\begin{equation*}
    [\Delta_{p\tau}x_k]:=\left\{\Delta_{p\tau}x_k,\Delta_{p\tau}\dot x_k,\Delta_{p\tau} \ddot x_k,...\right\}.
\end{equation*}
Then, one can define the set of equation nodes, variable nodes, and edges as
\begin{align*}
    V^S_E&:=\left\{F_1,...,F_n\right\},\\
    V^S_V&:=\Big\{[\Delta_{p\tau}x_k]\;\Big|\; \exists k,p\in\mathbb{N}_0\cup\{-1\} \text{ s.t. }\Delta_{p\tau}x_k^{(q)}\\
     &\qquad\qquad\qquad\;\;\text{ appears in DDAE for any }q\in\mathbb{N}_0\Big\},\\
    E^S&:=\left\{\{F_i,v_k\}\in V^S_E\times V^S_V \;\Big| \; \exists \tilde x \in v_k\text{ that appears in }F_i \right\},
\end{align*}
respectively, which yields the shifting graph defined as~$G^S:=(V^S_E \dot \cup V^S_V,E^S)$.

In the bipartite shifting graph, one successively assigns to each equation node $F_i\in V^S_E$ an equivalence class $v_k\in V^S_V$ of highest shift, i.e., if $v_k=[\Delta_{p\tau}x_k]$ is of highest shift and occurs in $F_i$, then $[\Delta_{(p+\ell)\tau}x_k]$, for $\ell>0$, does not occur in any equation. By definition, a variable node $[\Delta_{p\tau}x_k]$ with negative shift $p=-1$ is never of highest shift and cannot be matched to an equation node.  Like that, a matching $\mathcal{M}$ is constructed, consisting of all assigned pairs $\{F_i,v_k\}$. If a particular $F_j$ cannot be matched to a variable node that is not in $\mathcal{M}$ yet, the node $F_j$ is called \textit{exposed with respect to $\mathcal{M}$}. The corresponding equation is shifted, together with all other equations that $F_j$ is connected to via alternating paths with respect to $\mathcal{M}$ in~$G^S$. An \textit{alternating path with respect to $\mathcal{M}$} is a sequence of edges
\begin{equation*}
    \left(\left\{F_{i_1},v_{k_1}\right\},\left\{v_{k_1},F_{i_2}\right\},\left\{F_{i_2},v_{k_2}\right\},...,\left\{v_{k_{N-1}},F_{i_N}\right\}\right)
\end{equation*}
in $G^S$, where all $i_\ell$, $\ell=1,...,N$, and all $k_m$, $m=1,...,N-1$, are distinct, respectively, and that has alternating non-matching and matching edges while starting with a non-matching edge.

However, simply shifting all these equations may not be sufficient, since the connection may be given only implicitly through the equivalence classes of the variable nodes. To see this, define $G:=(V_E\dot\cup V_V,E)$ as the \textit{graph of the DDAE} with
\begin{align*}
    V_E&:=\left\{ F_1,...,F_n\right\},\\
    V_V&:=\Big\{\Delta_{p\tau}x_k^{(q)}\;\Big|\; \exists k,p,q\in\mathbb{N}_0\cup\{-1\} \text{ s.t. }\\
     &\qquad\qquad\qquad\;\;\Delta_{p\tau}x_k^{(q)}\text{ appears in DDAE}\Big\},\\
    E&:=\left\{\left.\{F_i,v_k\}\in V_E\times V_V \;\right| \; v_k\text{ appears in }F_i \right\}.
\end{align*}
In other words, the graph of the DDAE contains all variables explicitly as distinct nodes without using equivalence classes. An implicit connection in the shifting graph means that the involved equations contain variables with the same shift but a different differentiation order. Thus, they belong to the same variable node in the shifting graph but not to the same node in the graph of the DDAE. There is an alternating path connecting the exposed equation $F_j$ and the equation that has to be shifted in $G^S$ but not in $G$. In this case, an explicit connection has to be established by differentiating the involved equations that do not depend on the highest derivative in the equivalence class. To ensure that all implicit connections are resolved, all possible connections have to be identified and checked.

Theoretically, this could be done by just checking all possible combinations of edges of $E^S$ that yield alternating paths. In practice, however, this approach is not feasible, because the amount of combinations increases rapidly with the number of nodes and edges of the graph~$G^S$. By reformulating the problem, it can be solved much more efficiently.

\begin{figure}[htbp]
    \centering
    \subfigure[The shifting graph of \cref{eq:example1}.]{
        \centering
        \begin{tikzpicture} 
        	\enode[try]{1}{$F_1$}
    		\enode[try]{2}{$F_2$}
    		\enode[try]{3}{$F_3$}
    		\vnode{1}{$x_1,\dot x_1$}{}
    		\vnode{2}{$x_2$}{}
    		\vnode[low]{3}{$\Delta_{-\tau} x_3$}{}
    		\edge[assign]{e1}{v1}
    		\edge[assign]{e2}{v2}
    		\edge{e2}{v1}
    		\edge{e3}{v1}
    		\edge{e3}{v2}
    		\edge[low]{e3}{v3}
        \end{tikzpicture}
    	\label{fig:example1.1}
    }
    \subfigure[The first connection.]{
        \centering
        \begin{tikzpicture} 
        	\enode[try]{1}{$F_1$}
    		\enode[try]{2}{$F_2$}
    		\enode[try]{3}{$F_3$}
    		\vnode{1}{$x_1,\dot x_1$}{}
    		\vnode{2}{$x_2$}{}
    		\vnode[low]{3}{$\Delta_{-\tau} x_3$}{}
    		\edge[con]{e1}{v1}
    		\edge[con]{e2}{v2}
    		\edge[con]{e2}{v1}
    		\edge{e3}{v1}
    		\edge[con]{e3}{v2}
    		\edge[low]{e3}{v3}
        \end{tikzpicture}
    	\label{fig:example1.2}
    }
    \subfigure[The second connection.]{
        \centering
        \begin{tikzpicture} 
        	\enode[try]{1}{$F_1$}
    		\enode[try]{2}{$F_2$}
    		\enode[try]{3}{$F_3$}
    		\vnode{1}{$x_1,\dot x_1$}{}
    		\vnode{2}{$x_2$}{}
    		\vnode[low]{3}{$\Delta_{-\tau} x_3$}{}
    		\edge[con]{e1}{v1}
    		\edge[con]{e2}{v2}
    		\edge{e2}{v1}
    		\edge[con]{e3}{v1}
    		\edge[con]{e3}{v2}
    		\edge[low]{e3}{v3}
        \end{tikzpicture}
    	\label{fig:example1.3}
    }
    \subfigure[The graph of the DDAE of \cref{eq:example1}.]{
        \centering
        \begin{tikzpicture} 
        	\enode[try]{1}{$F_1$}
    		\enode[try]{2}{$F_2$}
    		\enode[try]{3}{$F_3$}
    		\vnode{1}{$x_1$}{}
    		\vnode{2}{$x_2$}{}
    		\vnode{3}{$\dot x_1$}{}
    		\vnode{4}{$\Delta_{-\tau} x_3$}{}
    		\edge{e1}{v3}
    		\edge{e2}{v3}
    		\edge{e2}{v2}
    		\edge{e3}{v1}
    		\edge{e3}{v2}
    		\edge{e3}{v4}
        \end{tikzpicture}
    	\label{fig:example1.4}
    }
    \caption{Visualization of the problem of finding all connections for $F_3$ with respect to $\mathcal{M}$ in the shifting step of the DDAE \cref{eq:example1}.}
    \label{fig:example1}
\end{figure}
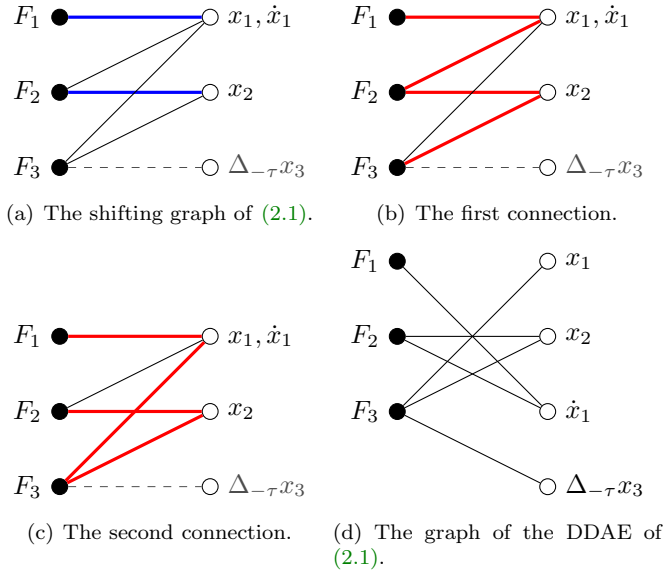

\begin{example}
\label{ex:con}
For an illustration of the problem, consider the DDAE from \cite[Example 3.11, p.17]{ahrens20}:
\begin{equation}\label{eq:example1}
    \begin{aligned}
    \dot x_1 &= f_1,\\
    \dot x_1 &= x_2+ f_2,\\
    0 &= x_1 +x_2+ \Delta_{-\tau} x_3 + f_3.
    \end{aligned}
\end{equation}
After assigning the equivalence classes $[x_1]=\{x_1,\dot x_1\}$ to~$F_1$ and~$[x_2]=\{x_2\}$ to~$F_2$ in the shifting step, this yields the matching
\begin{equation*}
    \mathcal{M}=\left\{\{F_1,\{x_1,\dot x_1\}\},\{F_2,\{x_2\}\}\right\}
\end{equation*}
and the shifting graph $G^S$ in figure \Cref{fig:example1.1} (matching edges are colored in blue). Equation $F_3$ is exposed and cannot be matched directly to any equivalence class, but it is connected via alternating paths to the other equation nodes.
Therefore, all possible connections for~$F_3$ with respect to $\mathcal{M}$ have to be found. These are
\begin{equation*}
    \mathcal{C}_1=\left\{(F_3,\{x_2\},F_2),(F_2,\{x_1,\dot x_1\},F_1)\right\}
\end{equation*}
(pictured red in \Cref{fig:example1.2}) and
\begin{equation*}
    \mathcal{C}_2=\left\{(F_3,\{x_1,\dot x_1\},F_1),(F_3,\{x_2\},F_2)\right\}
\end{equation*}
(pictured red in \Cref{fig:example1.3}). Additionally, $G$, the graph of the DDAE, is visualized in \Cref{fig:example1.4}. One can see that connection $\mathcal{C}_1$ does also exist in $G$ via the path
\begin{equation*}
    \left(\{F_3,x_2\},\{x_2,F_2\},\{F_2,\dot x_1\},\{\dot x_1,F_1\}\right)
\end{equation*}
and hence, is an explicit connection. However, $\mathcal{C}_2$ is implicit, as the alternating path~${(F_3,\{x_1,\dot x_1\},F_1)}$ does not connect $F_3$ and $F_1$ in $G$. One has to differentiate $F_3$ to establish an explicit connection. It can be seen that checking one connection is not enough, all have to be identified to resolve possible implicit connections in the shifting graph.
\end{example}

\section{Reformulation of problem}
\label{sec:3}

First, a connection has to be technically defined. To simplify the notation, let $G=(V_E \dot\cup V_V,E)$ be a bipartite graph with equation nodes $V_E$ and variable nodes $V_V$, where $E$ contains only edges between $V_E$ and $V_V$, not between nodes of one set. Further, let a matching 
\begin{equation*}
    \mathcal{M}=\left\{\left\{F_{i_1},v_{k_1}\right\},..., \left\{F_{i_M},v_{k_M}\right\}\right\}\in E^M
\end{equation*}
be given in $G$ with $M<n$ and $F_j\in V_E$ as an exposed equation node with respect to $\mathcal{M}$. Then,
\begin{align*}
    C_{F_j}:=\big\{ \left.F_k\in V_E \;\right|\; \exists\text{ alternating path }\text{between } F_j \text{ and } F_k \text{ in } G \big\}
\end{align*}
denotes the equation nodes that are connected to $F_j$ via an alternating path. This set is automatically generated by the algorithm Augmentpath (see \cite[Algorithm 1, p.10]{ahrens20}, \cite[Algorithm 3.2, p.217]{pantelides88}).
\begin{definitionn}
\label{def:con}
Let $G=(V_E \dot\cup V_V,E)$ be a bipartite graph and $\mathcal{M}$ a matching in $G$. Further, let $F_j\in V_E$ be exposed with respect to $\mathcal{M}$ and $C_{F_j}$ as defined above. A \textit{connection for $F_j$ with respect to $\mathcal{M}$} is defined as a set of connected alternating paths $(F_i,v_k,F_\ell) \in V_E \times V_V \times V_E$, with~$\{F_i,v_k\} \in  E \setminus \mathcal{M}$ and $\{v_k,F_\ell\}\in\mathcal{M}$. Additionally, it has to hold that for all~$F_\ell\in C_{F_j}$ the corresponding matching edge~$\{v_k,F_\ell\} \in \mathcal{M}$ occurs exactly once and there is at least one alternating path starting in $F_j$.
\end{definitionn}

Note that the definition of a connection for $F_j$ with respect to $\mathcal{M}$ has been changed in comparison to the definition from \cite[p.18]{ahrens20}. The previous definition allows sets of alternating paths that contain cycles and not necessarily the exposed node $F_j$. In the forthcoming \Cref{cor:cycle-free}, it is shown that a connection in the sense of \Cref{def:con}, however, is cycle-free. Also, a connection for $F_j$ with respect to $\mathcal{M}$ will simply be referred to as a connection when it is clear which node is exposed and which matching the connection is based on.

With the exact definition of a connection, one can further define the connection graph by interpreting the alternating paths from this definition as directed edges between the equation nodes.

\begin{definitionn}
\label{def:congraph}
Let $G=(V_E \dot\cup V_V,E)$ be a bipartite graph and $\mathcal{M}$ a matching in $G$. Further, let $F_j\in V_E$ be exposed with respect to $\mathcal{M}$ and $C_{F_j}$ as defined above. Define the set of nodes $V_H:=C_{F_j}\dot\cup \{F_j\}$ and directed edges
\begin{align*}
    E_H:=\big\{&\left. (F_i,F_\ell)\in V_H \times V_H \;\right| \; (F_i,v_k,F_\ell) \textnormal{ is an alternating}\\
    &\text{path with }\{F_i,v_k\}\in E\setminus \mathcal{M},\{v_k,F_\ell\}\in\mathcal{M}\big\}.
\end{align*}
Then, the directed graph $H:=(V_H,E_H)$ is called \textit{connection graph for $F_j$ with respect to $\mathcal{M}$}.
\end{definitionn}

\begin{remark}
\label{rem:var}
Denoting the alternating path $(F_i,v_k,F_\ell)$ as~$(F_i,F_\ell)$, it might seem as if information is lost about which variable node $v_k$ connects the equation nodes~$F_i$ and $F_\ell$. However, since each $F_\ell$ is uniquely matched to one~$v_k$ in $\mathcal{M}$, the variable node can easily be reconstructed from the directed edge $(F_i,F_\ell)$ using $\mathcal{M}$. A second approach to not lose information is to define edge weights~$w_{i\ell}=k$ for the edges $(F_i,F_\ell)$, i.e., if $v_k$ is to be reconstructed from $(F_i,F_\ell)$, it holds that $v_k=v_{w_{i\ell}}$.
\end{remark}

\noindent Similar to before, the connection graph for $F_j$ with respect to $\mathcal{M}$ will be referred to simply as connection graph when it is clear which node is exposed and which matching the connection graph is based on.

\begin{example}
\label{ex:congraph}
Consider again the DDAE \cref{eq:example1} from \Cref{ex:con} and the shifting graph from \Cref{fig:example1.1}. Based on the matching
\begin{equation*}
    \mathcal{M}=\left\{\{F_1,\{x_1,\dot x_1\}\},\{F_2,\{x_2\}\}\right\}
\end{equation*}
and $C_{F_3}=\{F_1,F_2\}$, the connection graph for $F_3$ with respect to $\mathcal{M}$ can be defined according to \Cref{def:congraph} as~$H=(V_H,E_H)$ with
\begin{align*}
    V_H&=\{F_1,F_2,F_3\},\\
    E_H&=\left\{(F_2,F_1),(F_3,F_1),(F_3,F_2)\right\}.
\end{align*}
A picture of $H$ can be seen in \Cref{fig:example2}.
\end{example}

\begin{figure}[b!]
    \centering
    \begin{tikzpicture}[node distance={20mm}, roundnode/.style={circle, draw=black, thin}]
	    \node[roundnode](3){$F_3$};
	    \node[roundnode](1)[right=of 3]{$F_1$};
	    \node[roundnode](2)[below=of 3]{$F_2$};
	    \draw[->,>=stealth,thick] (3) -- (1);
	    \draw[->,>=stealth,thick] (3) -- (2);
	    \draw[->,>=stealth,thick] (2) -- (1);
	\end{tikzpicture}
    \caption{The connection graph for $F_3$ with respect to~$\mathcal{M}$ of the DDAE \cref{eq:example1}.}
    \label{fig:example2}
\end{figure}
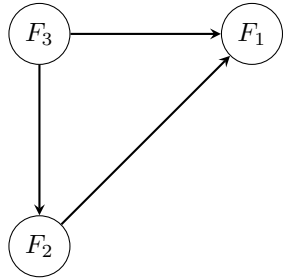

The connection graph facilitates to reformulate the problem of finding all connections in the sense of \Cref{def:con} by transferring it from the shifting graph to the connection graph. It translates to finding all arborescences (defined below) with root $F_j$ in $H$. To prove this, some definitions and lemmas from graph theory are needed (see \cite[p.71-73]{algmath} for reference and proofs).

\begin{definitionn}
\label{def:arb}
\cite[Definition 6.15, p.71, Definition 6.20, 6.22, p.73]{algmath}
\begin{enumerate}
    \item An undirected graph is called \textit{forest} if it contains no cycles.
    \item An undirected graph is called \textit{tree} if it is a forest and connected.
    \item Given a directed graph $G$, one can replace every directed edge by an undirected edge to get an undirected graph. The arising graph is called the \textit{underlying undirected graph} of $G$.
    \item A directed graph is called \textit{branching} if its underlying undirected graph is a forest and every node has at most one edge ending in it.
    \item A directed graph is called \textit{arborescence} if it is a connected branching. 
\end{enumerate}

\end{definitionn}

\begin{lemma} 
\label{lem:tree}
\textnormal{\cite[Theorem 6.18, p.72]{algmath}}
Let $G$ be an undirected graph with $n$ vertices. Then, $G$ is a tree if and only if $G$ has $n-1$ edges and is connected.
\end{lemma}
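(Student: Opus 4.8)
The plan is to prove both implications of the "if and only if" separately, treating the classical characterization of trees as standard.

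\textbf{Forward direction.} Suppose $G$ is a tree on $n$ vertices. By definition it is connected, so it only remains to show it has exactly $n-1$ edges. I would argue by induction on $n$. For $n=1$, a tree is a single vertex with no edges, and indeed $0 = n-1$. For the inductive step, I would first observe that any finite tree with at least two vertices has a leaf, i.e.\ a vertex of degree $1$: if every vertex had degree at least $2$, one could greedily follow edges without immediately backtracking and, by finiteness, eventually revisit a vertex, producing a cycle and contradicting the forest property. Removing such a leaf $v$ together with its unique incident edge yields a graph $G'$ on $n-1$ vertices which is still acyclic (removing edges cannot create cycles) and still connected (the only vertex whose connectivity could be affected is $v$ itself, which is gone). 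Hence $G'$ is a tree on $n-1$ vertices, so by the induction hypothesis it has $n-2$ edges, and therefore $G$ has $n-1$ edges.

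\textbf{Reverse direction.} Suppose $G$ is connected with exactly $n-1$ edges; I must show it is acyclic. Suppose for contradiction that $G$ contains a cycle and pick an edge $e$ lying on that cycle. Deleting $e$ cannot disconnect $G$, since any path that used $e$ can be rerouted along the remaining part of the cycle; hence $G - e$ is still connected, has $n$ vertices, and has $n-2$ edges. But a connected graph on $n$ vertices must have at least $n-1$ edges — this is the easy counting fact that a spanning connected subgraph cannot be built with fewer than $n-1$ edges, provable by adding vertices one at a time along a spanning tree or by induction on $n$. This contradicts $G - e$ having only $n-2$ edges, so $G$ has no cycle and is therefore a tree.

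\textbf{Main obstacle.} The proof itself is routine; the only points requiring care are the two auxiliary facts invoked, namely that every finite tree with $\ge 2$ vertices has a leaf, and that every connected graph on $n$ vertices has at least $n-1$ edges. Since the statement is quoted verbatim from \cite[Theorem 6.18]{algmath}, the cleanest approach in this paper is simply to cite it rather than reproduce the argument; if a self-contained proof is wanted, the induction above is the shortest route, and the leaf-existence lemma is the one step where one should be slightly careful to phrase the cycle-extraction argument correctly for finite graphs.
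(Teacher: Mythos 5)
Your proof is correct and uses the standard argument (leaf-removal induction for the forward direction, edge-deletion-in-a-cycle plus the lower bound of $n-1$ edges for any connected graph for the reverse). The paper itself offers no proof at all for this lemma: it is quoted verbatim from the cited textbook and the surrounding text explicitly defers to that reference for proofs, which matches the judgment you make in your final paragraph that citing is the cleanest course here.
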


\noindent The underlying undirected graph of an arborescence has to be a connected forest, i.e., a tree. According to \Cref{lem:tree}, an arborescence with $n$ nodes thus has~$n-1$ edges and, according to \Cref{def:arb}, every node has at most one edge ending in it. Therefore, there is exactly one node $r$ with no incoming edge. Let $\delta^-(v)$ be the set of incoming edges of a vertex~$v$ of $G$. Then, this condition can be formulated as ${\delta^-(r)=\emptyset}$. In this case,~$r$ is called \textit{root} of the arborescence. For any edge $(u,v)$ of an arborescence, $v$ is called a \textit{child} of $u$ and $u$ the \textit{predecessor} of~$v$. Vertices with no children are called \textit{leaves} (see \cite[p.73]{algmath}).

\begin{lemma}
\label{lem:arb}
\textnormal{\cite[Theorem 6.23, p.73]{algmath}}
Let $G$ be a directed graph and $r$ a vertex of $G$. Then, the following statements are equivalent:
\begin{enumerate}
    \item $G$ is an arborescence with root $r$.
    \item $G$ is a branching and $\delta^-(r)=\emptyset$.
    \item $\delta^-(r)=\emptyset$ and there exists a uniquely determined directed path from $r$ to every vertex in $G$.
\end{enumerate}
\end{lemma}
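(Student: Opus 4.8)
The plan is to prove the three-way equivalence as a cycle of implications i) $\Rightarrow$ ii) $\Rightarrow$ iii) $\Rightarrow$ i), relying on \Cref{def:arb}, \Cref{lem:tree}, and the in-degree bookkeeping already carried out in the paragraph following \Cref{lem:tree}. The implication i) $\Rightarrow$ ii) is essentially a reformulation of the definitions: by \Cref{def:arb} an arborescence is a connected branching, hence in particular a branching, and the fact that its root $r$ satisfies $\delta^-(r)=\emptyset$ is exactly the counting remark recalled after \Cref{lem:tree} --- the underlying undirected graph is a connected forest, i.e.\ a tree, so by \Cref{lem:tree} it has $n-1$ edges; the $n$ in-degrees are each at most $1$ and sum to $n-1$, so precisely one vertex, namely $r$, has in-degree $0$.

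For ii) $\Rightarrow$ iii) I would fix a branching $G$ with $\delta^-(r)=\emptyset$, where I read ii) together with the standing assumption that $G$ is connected (this must be invoked somewhere: two isolated vertices already form a branching with an in-degree-$0$ vertex but admit no directed path between them). Since $G$ is a branching, its underlying undirected graph is a forest, and being connected it is a tree; hence, by \Cref{lem:tree} and the standard fact that a tree has a unique undirected path between any two of its vertices, each vertex $v$ is joined to $r$ by a unique undirected path $P$. The substantive step is to upgrade $P$ to the underlying path of a \emph{directed} $r$--$v$ path. Traversing $P$ from $r$, its first edge must point away from $r$ because $\delta^-(r)=\emptyset$; if some edge of $P$ pointed ``backwards'', then at the first such reversal one obtains an interior vertex of $P$ carrying two distinct incoming edges, contradicting the in-degree bound in the definition of a branching. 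Thus every edge of $P$ points forward, giving a directed $r$--$v$ path, and its uniqueness is inherited from the uniqueness of $P$ (any directed $r$--$v$ path underlies an undirected $r$--$v$ path).

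For iii) $\Rightarrow$ i) I would assume $\delta^-(r)=\emptyset$ together with the existence of a unique directed path from $r$ to every vertex. The underlying undirected graph is then connected, since each vertex is joined to $r$. Next, every vertex has in-degree at most $1$: if some $v\neq r$ had two distinct incoming edges $(a,v)$ and $(b,v)$, then prefixing these edges to directed paths from $r$ to $a$ and from $r$ to $b$ would produce two distinct directed $r$--$v$ paths, contradicting uniqueness. Hence the number of edges is $\sum_v|\delta^-(v)|\le n-1$, and together with connectedness and \Cref{lem:tree} this forces exactly $n-1$ edges and the underlying graph to be a tree, in particular a forest. Therefore $G$ is a connected branching, i.e.\ an arborescence by \Cref{def:arb}, and its unique in-degree-$0$ vertex is $r$, so $r$ is the root.

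The routine parts are i) $\Rightarrow$ ii) and the counting step in iii) $\Rightarrow$ i). The step I expect to be the main obstacle is the orientation argument inside ii) $\Rightarrow$ iii): turning the purely combinatorial unoriented tree-path into a directed path emanating from $r$ forces one to use both defining properties of a branching (the underlying forest structure and the in-degree bound) simultaneously, and it is also the point at which one must be careful about the precise meaning of ``path'' (simple path versus walk) and about where connectedness of $G$ enters the argument, since the literal reading of ii) on its own does not yield iii).
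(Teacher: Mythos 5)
The paper does not prove \Cref{lem:arb}; it is quoted from \cite{algmath}, so your attempt is not competing against an argument in the text but supplying one the paper omits. Your reading of ii) is on point: as literally written, a branching with $\delta^-(r)=\emptyset$ need not be connected (two isolated vertices already satisfy ii)), so ii) alone cannot yield i) or iii), and re-inserting connectedness, as you do, is necessary. Your orientation argument in ii)~$\Rightarrow$~iii) --- first edge of $P$ points outward because $\delta^-(r)=\emptyset$, and any later reversal would hand an interior vertex of $P$ two incoming edges --- is correct and is indeed the crux of that direction.

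The gap is in iii)~$\Rightarrow$~i), in the step asserting in-degree at most one. Appending the edge $(a,v)$ to a directed $r$--$a$ path gives a directed $r$--$v$ path only if $v$ does not already lie on that path; otherwise you produce a walk with a repeated vertex and obtain no second path, hence no contradiction. And this is not merely an unhandled case: with ``directed path'' meaning a simple directed path, iii) genuinely fails to imply i). Take $V=\{r,v,a,b\}$ and $E=\{(r,v),(v,a),(a,b),(b,v)\}$: then $\delta^-(r)=\emptyset$ and the unique simple directed paths from $r$ are $(r,v)$, $(r,v,a)$ and $(r,v,a,b)$, so iii) holds, yet $v$ has in-degree two and the cycle $v\to a\to b\to v$ makes the underlying undirected graph fail to be a forest, so $G$ is not a branching. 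Closing the gap needs an ingredient your proposal does not supply: either read ``directed path'' in iii) as ``directed walk'' (circling the cycle then destroys uniqueness, and your appending argument becomes sound), or prove acyclicity first, or carry an edge-count hypothesis $|E|=|V|-1$. The last is what keeps the lemma safe in its only use in the paper, since in the proof of \Cref{thm:con} the subgraph $\mathcal{H}$ is always handed to the lemma with exactly $|V_H|-1$ edges.
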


\begin{theorem}
\label{thm:con}
Let $G=(V_E \dot\cup V_V,E)$ be a bipartite graph,~$\mathcal{M}$ a matching in $G$, $F_j\in V_E$ exposed with respect to $\mathcal{M}$, and~$H=(V_H,E_H)$ the connection graph for $F_j$ with respect to $\mathcal{M}$. Then, the following statements are equivalent:
\begin{enumerate}
    \item The set
    \begin{equation*}
        \mathcal{C}:=\left\{\left(F_{i_1},v_{k_1},F_{\ell_1}\right),...,\left(F_{i_M},v_{k_M},F_{\ell_M}\right)\right\}
    \end{equation*}
    is a connection for $F_j$ with respect to $\mathcal{M}$.
    \item The directed subgraph
    \begin{equation*}
        \mathcal{H}:= \left(V_H,\left\{\left(F_{i_1},F_{\ell_1}\right),...,\left(F_{i_M},F_{\ell_M}\right)\right\}\right)\subseteq H
    \end{equation*}
    is an arborescence with root $F_j$.
\end{enumerate}
\end{theorem}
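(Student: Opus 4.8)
The plan is to prove the equivalence by establishing both directions directly from the definitions, using Lemma~\ref{lem:arb} to characterise arborescences. The key observation linking the two pictures is the one recorded in Remark~\ref{rem:var}: since $\mathcal{M}$ is a matching, each equation node $F_\ell \in C_{F_j}$ is matched to a unique variable node $v_k$, so the map $(F_i,v_k,F_\ell) \mapsto (F_i,F_\ell)$ between alternating paths contributing to a connection and directed edges of $H$ is a bijection (for fixed target $F_\ell$, the middle node $v_k$ is forced). Hence a set $\mathcal{C}$ of alternating paths and the corresponding edge set of $\mathcal{H}$ carry exactly the same combinatorial data, and the two conditions in Definition~\ref{def:con} translate term by term into graph-theoretic conditions on $\mathcal{H}$.

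For the direction (i)$\Rightarrow$(ii): assume $\mathcal{C}$ is a connection. The condition that each matching edge $\{v_k,F_\ell\}$ with $F_\ell \in C_{F_j}$ occurs exactly once says precisely that, in $\mathcal{H}$, every node of $V_H \setminus \{F_j\}$ has exactly one incoming edge, and that $F_j$ has none (no alternating path of the form $(\cdot,\cdot,F_j)$ can appear, since $F_j$ is exposed and thus not matched, so it is never the endpoint of a matching edge). Thus $\delta^-(F_j) = \emptyset$ and $|\delta^-(v)| = 1$ for all other $v \in V_H$, which in particular forces $M = |C_{F_j}| = |V_H| - 1$. It remains to check that $\mathcal{H}$ is a branching, i.e. that its underlying undirected graph is a forest; combined with $|V_H|-1$ edges and (once connectedness is shown) Lemma~\ref{lem:tree}, this gives a tree, hence $\mathcal{H}$ is a connected branching, i.e. an arborescence, and by Lemma~\ref{lem:arb} its root is $F_j$. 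The acyclicity and connectedness are the substantive points: connectedness will follow by an induction that repeatedly uses ``there is at least one alternating path starting in $F_j$'' together with the fact that every other node has an incoming edge, tracing predecessors back until one reaches $F_j$ (a finite process because there are finitely many nodes and the in-degree-one structure prevents revisiting). Absence of an undirected cycle follows from the in-degree conditions: a cycle in the underlying undirected graph, given that every node has in-degree at most one, would have to be a directed cycle, but a directed cycle contains no node with $\delta^-=\emptyset$, contradicting $\delta^-(F_j)=\emptyset$ once we know every node is reachable from $F_j$ — this is essentially Corollary~\ref{cor:cycle-free} referenced in the text.

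For the direction (ii)$\Rightarrow$(i): assume $\mathcal{H}$ is an arborescence with root $F_j$. By Lemma~\ref{lem:arb}(iii), $\delta^-(F_j)=\emptyset$ and every vertex is reachable from $F_j$ by a unique directed path; since every vertex other than the root of an arborescence has exactly one incoming edge (by Definition~\ref{def:arb} and the $|V_H|-1$-edge count from Lemma~\ref{lem:tree}), each matching edge $\{v_k,F_\ell\}$ for $F_\ell\in C_{F_j}$ is used exactly once when we pull the edges of $\mathcal{H}$ back to alternating paths via the bijection above. Reachability of any node from $F_j$ forces at least one edge of $\mathcal{H}$ to leave $F_j$ (otherwise nothing but $F_j$ is reachable, contradicting $V_H = C_{F_j}\dot\cup\{F_j\}$ with $C_{F_j}\neq\emptyset$), which gives the ``at least one alternating path starting in $F_j$'' clause. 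Hence $\mathcal{C}$ satisfies all requirements of Definition~\ref{def:con}.

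The main obstacle I expect is bookkeeping around the root condition and the edge count rather than anything deep: one must be careful that ``exposed'' genuinely prevents $F_j$ from having an incoming edge in $\mathcal{H}$, that the sets $\mathcal{C}$ and the edge set of $\mathcal{H}$ have the same cardinality $M$ (so that ``occurs exactly once'' is not vacuously satisfiable by a smaller set), and that the connectivity argument is phrased as a clean finite induction on predecessors rather than hand-waved. Everything else is a direct unfolding of Definitions~\ref{def:con} and~\ref{def:arb} through the matching-induced bijection, with Lemmas~\ref{lem:tree} and~\ref{lem:arb} supplying the equivalence between the counting/in-degree description and the arborescence description.
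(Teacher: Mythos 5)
Your overall framework — the matching-induced bijection between alternating paths $(F_i,v_k,F_\ell)$ and directed edges $(F_i,F_\ell)$, and the plan to translate each clause of \Cref{def:con} into a graph condition on $\mathcal{H}$ — is sound, and your (ii)$\Rightarrow$(i) argument agrees with the paper's. The direction (i)$\Rightarrow$(ii), however, contains a genuine gap in the connectedness step.

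You attempt to derive connectedness of the underlying undirected graph $\mathcal{H}_u$ from the in-degree conditions ($\delta^-(F_j)=\emptyset$ and $|\delta^-(v)|=1$ for every other $v\in V_H$) together with the ``at least one alternating path starting in $F_j$'' clause, by tracing predecessors back to $F_j$, asserting that ``the in-degree-one structure prevents revisiting.'' That assertion is false: a directed cycle is perfectly consistent with every node having in-degree one. For example, on $V_H=\{F_j,a,b,c,d\}$ the edge set $\{(F_j,a),(a,b),(c,d),(d,c)\}$ has $\delta^-(F_j)=\emptyset$, in-degree one at every other node, $|V_H|-1$ edges, and an edge leaving $F_j$ — yet the graph is disconnected, and tracing predecessors from $c$ cycles $c,d,c,d,\dots$ forever without reaching $F_j$. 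Your fallback acyclicity argument cannot rescue this, because it is circular: it is premised on ``once we know every node is reachable from $F_j$,'' which is precisely the connectedness you are trying to establish.

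What the argument is missing is the first clause of \Cref{def:con}: the alternating paths in $\mathcal{C}$ are required to be \emph{connected} to one another. That hypothesis is what the paper uses to conclude directly that $\mathcal{H}_u$ is connected; combined with the edge count $|E_{\mathcal{H}}|=|V_H|-1$, \Cref{lem:tree} then yields that $\mathcal{H}_u$ is a tree, the in-degree conditions you derived make $\mathcal{H}$ a branching, and \Cref{lem:arb} closes the argument. Without invoking the connectedness clause of \Cref{def:con}, the remaining hypotheses are simply too weak, as the counterexample above shows; so this is a necessary input to the proof, not bookkeeping.
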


\begin{proof} 
$\Rightarrow$ ii): Let $F_j\in V_E$ be exposed, denote a connection for~$F_j$ as ~$\mathcal{C}=\{(F_{i_1},v_{k_1},F_{\ell_1}),...,(F_{i_M},v_{k_M},F_{\ell_M})\}$, and let $H$ be the connection graph for $F_j$, with respect to $\mathcal{M}$, respectively. Additionally, denote
\begin{equation*}
    \mathcal{H}=(V_H,E_\mathcal{H}),\;\;\;\; E_\mathcal{H}:=\left\{\left(F_{i_1},F_{\ell_1}\right),...,\left(F_{i_M},F_{\ell_M}\right)\right\}\subseteq E_H,
\end{equation*}
as a subgraph of $H$, and
\begin{equation*}
    \mathcal{H}_u:=(V_H,E_u),\quad E_u:=\left\{\left\{F_{i_1},F_{\ell_1}\right\},...,\left\{F_{i_M},F_{\ell_M}\right\}\right\},
\end{equation*}
as the underlying undirected graph of $\mathcal{H}$. According to \Cref{def:congraph}, it holds that ${|V_H|=|C_{F_j}|+1}$ and therefore,
\begin{equation*}
    M=|\mathcal{C}|=|C_{F_j}|=|V_H|-1.
\end{equation*} 
It follows that $ \mathcal{H}_u$ has $|V_H|-1$ edges.
According to \Cref{def:con}, the alternating paths $(F_i,v_k,F_\ell)\in\mathcal{C}$ are connected and each~$F_\ell~\in~C_{F_j}$ occurs exactly once. The variable nodes~$v_k$ are uniquely determined by the equation nodes $F_\ell$ via the matching~$\mathcal{M}$. This implies that each~$v_{k_m}$, for $m=1,...,M$, also occurs only once in $\mathcal{C}$ and the alternating paths must be connected via the equation nodes $F_i$ and $F_\ell$. Thus, the edges in $E_u$ are connected and thereby, $\mathcal{H}_u$ is connected as well.

In summary, $\mathcal{H}_u$ has $|V_H|-1$ edges and is connected. Therefore, according to \Cref{lem:tree}, the underlying undirected graph of $\mathcal{H}$ is a tree (and also a forest). Additionally, $V_H=C_{F_j}\dot\cup \{F_j\}$ and it was already mentioned that each $F_\ell\in C_{F_j}$ occurs exactly once as second equation node in the alternating paths $(F_i,v_k,F_\ell)\in\mathcal{C}$. The node~$F_j$ itself has no alternating path leading to it because it is exposed. Therefore, every node of $\mathcal{H}$ has at most one edge ending in it. According to \Cref{def:arb}, the graph $\mathcal{H}$ is a branching.

Finally, one can choose the exposed node $F_j$ as the root of~$\mathcal{H}$ because $\delta^-(F_j)=\emptyset$. By \Cref{lem:arb}, it follows that~$\mathcal{H}$ is an arborescence with root $F_j$.

ii) $\Rightarrow$ i): Let $\mathcal{H}= (V_H,\{(F_{i_1},F_{\ell_1}),...,(F_{i_M},F_{\ell_M})\})\subseteq H$ be an arborescence with root $F_j$, and denote
\begin{equation*}
    \mathcal{C}=\left\{\left(F_{i_1},v_{k_1},F_{\ell_1}\right),...,\left(F_{i_M},v_{k_M},F_{\ell_M}\right)\right\}.
\end{equation*}
Since $\mathcal{H}$ is a subgraph of the connection graph for $F_j$ with respect to $\mathcal{M}$, it follows that all $(F_i,v_k,F_\ell)\in\mathcal{C}$ are alternating paths with $\{F_i,v_k\}\in E\setminus \mathcal{M}$, $\{v_k,F_\ell\}\in\mathcal{M}$, where $v_k$ is uniquely determined by the matching $\mathcal{M}$ (see \Cref{def:congraph} and \Cref{rem:var}). From \Cref{lem:arb}, it is known that the root $F_j$ of the arborescence is a vertex of~$\mathcal{H}$ with $\delta^-(F_j)=\emptyset$ and, therefore, is also included as the starting node in an alternating path from $\mathcal{C}$.

Additionally, there exists a uniquely determined directed path from the root $F_j$ to every vertex in $\mathcal{H}$. Thus, the same property applies to the alternating paths in $\mathcal{C}$, which yields that they are connected and each $F_\ell\in C_{F_j}$ occurs exactly once as final node of an alternating path. Hence, $\mathcal{C}$ fulfills all properties of a connection for~$F_j$ with respect to $\mathcal{M}$.
\end{proof}

\begin{corollary}
\label{cor:cycle-free}
All connections for $F_j$ with respect to $\mathcal{M}$ are cycle-free.
\end{corollary}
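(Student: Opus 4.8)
The plan is to obtain \Cref{cor:cycle-free} immediately from the equivalence in \Cref{thm:con}, rather than arguing about alternating paths inside $G$ directly. Let $\mathcal{C}$ be an arbitrary connection for $F_j$ with respect to $\mathcal{M}$. By \Cref{thm:con}, the associated directed subgraph $\mathcal{H}\subseteq H$ — obtained by replacing each alternating path $(F_i,v_k,F_\ell)\in\mathcal{C}$ with the directed edge $(F_i,F_\ell)$ — is an arborescence with root $F_j$. By \Cref{def:arb}, an arborescence is a connected branching, and the underlying undirected graph of a branching is a forest, i.e., contains no cycle. Hence the underlying undirected graph of $\mathcal{H}$ is cycle-free; in particular $\mathcal{H}$ has no self-loops (these are already excluded because alternating paths have distinct equation endpoints) and no directed cycle, since a directed $2$-cycle would induce a double edge and a directed cycle of length at least three an undirected cycle in the underlying graph. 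The same conclusion also follows from \Cref{lem:arb}(iii): a directed cycle through a vertex $v$ would yield two distinct directed $F_j$–$v$ paths, contradicting uniqueness.

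It then remains to transport this back to the connection $\mathcal{C}$. A cycle in $\mathcal{C}$, in the natural sense, is a subfamily of its alternating paths $(F_{a_1},v_{b_1},F_{a_2}),(F_{a_2},v_{b_2},F_{a_3}),\dots,(F_{a_r},v_{b_r},F_{a_1})$ that closes up through the equation nodes. Under the edge-to-path correspondence defining $\mathcal{H}$, such a subfamily corresponds exactly to the closed directed walk $(F_{a_1},F_{a_2}),(F_{a_2},F_{a_3}),\dots,(F_{a_r},F_{a_1})$ in $\mathcal{H}$; and, since \Cref{def:con} forces each terminal equation node $F_\ell\in C_{F_j}$ (hence each matching edge $\{v_k,F_\ell\}$ and each $v_k$) to occur exactly once in $\mathcal{C}$, the edges of this walk are pairwise distinct, so it is a genuine directed cycle. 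As $\mathcal{H}$ admits none, no such subfamily can exist in $\mathcal{C}$, which is the assertion that every connection is cycle-free.

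The only step requiring a little care is the last one: stating precisely what ``cycle-free'' means for a \emph{set} of alternating paths and verifying that the map $\mathcal{C}\mapsto\mathcal{H}$ carries cycles to cycles and back. This is routine, exactly as in the proof of \Cref{thm:con}: because each $F_\ell$ has a unique matching partner $v_k$ in $\mathcal{M}$, the alternating paths of $\mathcal{C}$ can only be chained through their equation endpoints, so the incidence combinatorics of $\mathcal{C}$ and of $\mathcal{H}$ coincide. I do not anticipate a real obstacle; the corollary is essentially a reformulation of the ``forest'' clause in the definition of an arborescence through \Cref{thm:con}.
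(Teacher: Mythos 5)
Your proposal is correct and follows the same route as the paper: invoke \Cref{thm:con} to pass from a connection $\mathcal{C}$ to the corresponding arborescence $\mathcal{H}\subseteq H$, and then read off cycle-freeness from \Cref{def:arb}. The extra care you take in spelling out the cycle correspondence between $\mathcal{C}$ and $\mathcal{H}$ (and the alternative via \Cref{lem:arb}(iii)) is harmless elaboration of what the paper treats as immediate.
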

\begin{proof}
The proof follows directly from \Cref{thm:con}, using the equivalence of a connection for $F_j$ with respect to~$\mathcal{M}$ to an arborescence in the connection graph for $F_j$ with respect to~$\mathcal{M}$. Arborescences are by \Cref{def:arb} cycle-free.
\end{proof}

In the literature (e.g., \cite{gabow78}), an arborescence is also referred to as a \textit{spanning tree} of a directed graph. The sort of problem where all possible solutions to a computational problem have to be computed and explicitly returned as an output is called \textit{enumeration problem}. Methods for solving these problems are called \textit{enumeration algorithms}.

With \Cref{thm:con}, a reformulation of the initial problem (finding all connections) has been derived by showing that it is equivalent to the problem of enumerating all arborescences/spanning trees in the corresponding connection graph. Each spanning tree can then be interpreted as a connection. There are efficient algorithms to solve this enumeration problem, one of which is discussed in the next section.

\section{Enumeration of spanning trees}
\label{sec:4}

One enumeration algorithm for finding all spanning trees of a directed graph was published in \cite{gabow78}. It turns out to be an effective method for the purpose of this work and is therefore implemented in the (overall) Pantelides algorithm to solve the subproblem of finding all connections. In this section, a short summary is given of how this algorithm works. For further details of the implementation as well as theoretical results and their proofs, see \cite{gabow78}.

First, the important concept of so-called bridges has to be introduced. 
\begin{definitionn}
\cite[p.280]{gabow78}
Let $G=(V,E)$ be a directed graph and $r\in V$ a vertex.
\begin{enumerate}
    \item $G$ is called \textit{rooted at $r$} if there exists a spanning tree with root $r$ in $G$.
    \item An edge $e\in E$ is called a \textit{bridge for $r$} if $G$ is rooted at $r$ but $G\setminus\{e\}$ is not rooted at $r$.
    \item Equivalently, an edge $e\in E$ is a \textit{bridge for $r$} if it is part of every spanning tree rooted at $r$ in $G$.
\end{enumerate}
\end{definitionn}

Assume a directed graph $G=(V,E)$ and a root vertex $r$ are given and all spanning trees of $G$ rooted at $r$ have to be computed. This goal is accomplished by finding all spanning trees containing different subtrees $T\subseteq G$, also rooted at $r$.

Given a subtree $T$, the approach consists of successively adding edges to $T$ in the following way: A new edge $e_i:=(u,v)\in E$, directed from a vertex $u\in T$ to a vertex $v\notin T$, is added to $T$, and all spanning trees containing $T \cup \{e_i\}$ are computed. When this is done, the edge~$e_i$ is deleted from $G$ and $T$ and another edge~$e_j\in E\setminus\{e_i\}$ (directed from $T$ to a vertex not in $T$), is added to $T$. Again, all spanning trees containing $T \cup \{e_j\}$ are computed, then $e_j$ is deleted from $G$ and $T$. The same process continues with the next edge and is repeated until an edge is processed that is a bridge for $r$ in the modified graph $G\setminus\{e_i,e_j,...\}$.
Each spanning tree containing~$T$ has now been found exactly once.

A key point in this approach is to discover efficiently if an edge $e$ is a bridge. Assume all spanning trees containing $T\cup \{e\}$ have been computed and let $L$ be the last found spanning tree. It has to be checked if $e:=(u,v)$ is a bridge. 

There are several possibilities. The idea that is pursued in this algorithm is to consider the descendants and nondescendants of $v$ in $L$. \textit{Descendants of $v$ in $L$} are vertices that can be reached following a directed path starting in $v$ and using only edges of the spanning tree~$L$. Contrary, \textit{nondescendants of $v$ in $L$} are vertices for which there cannot be constructed such a path using edges from $L$. 

Clearly, if there is an edge in $G\setminus \{e\}$ that goes from a nondescendant of $v$ (in $L$) to $v$, then $e$ cannot be a bridge, since one could delete $e$ and replace it with that edge to construct another spanning tree. Thus, $G\setminus \{e\}$ is still rooted at $r$ and $e$ could not have been a bridge. On the other hand, if no edge that goes from a nondescendant of $v$ in $L$ to $v$ can be found, $e$ must be a bridge, because deleting $e$ leads to a graph $G\setminus\{e\}$ where there does not exist a path to vertex $v$ anymore. 

For this to hold true, the way edges are added plays an important role. Here, the algorithm adds edges depth-first. The depth of a vertex contained in a tree is the length of the path between the vertex and the root of the tree it is contained in. Thus, adding an edge depth-first means that it is added to the vertex that has the greatest depth in $T\cup\{e\}$. Particularly, this ensures that the last computed spanning tree that contains $T\cup\{e\}$ (namely the tree $L$) has the fewest descendants of $v$ amongst all spanning trees containing~${T\cup\{e\}}$. This fact can be used to prove that this bridge test works correctly (see \cite[Lemma 2, p.284]{gabow78} for more details).

Thus, it is important for the implementation to grow~$T$ depth-first. To do so, a stack $F$ is used, where edges are stored that are directed from vertices in $T$ to vertices not in $T$. Note that the action of removing an element from the top of a stack is referred to as \textit{popping}, whereas the action of adding an element to the top of a stack is referred to as \textit{pushing}. An edge $e:=(u,v)$ is always popped from the top of $F$ if it is added to $T$ and then, edges for $T\cup\{e\}$ are pushed onto the top of $F$. Also, some edges might be removed from the inner part of~$F$ while growing $T$. This is necessary for all edges in $F$ that are directed to $v$, the newest leaf of $T$. To ensure the depth-first property, these edges have to be restored at the exact same place in $F$ after all spanning trees containing~${T\cup\{e\}}$ have been found.

A second stack $FF$ is used to store already processed edges since they are temporarily deleted from $G$ but have to be restored later.

The full algorithm is stated in \Cref{alg:grow}. Note that the pseudo code uses MATLAB notation for indexing, i.e. array indexing begins at 1 and the index "end" of an array points to the last element of it or to the top of a stack. The symbol "$\triangleright$" indicates a comment in the code. \Cref{alg:enum} illustrates how to initialize the method.

\begin{algorithm}
\caption{GROW}
\label{alg:grow}
\textbf{Input:} directed graph $G=(V,E)$, directed subgraph ${T=(V_T,E_T) \subseteq G}$, stack of edges $F$, set of spanning trees~$S$\\
\textbf{Output:} set of spanning trees $S$, last computed spanning tree $L$\\
\begin{algorithmic}[1]
\IF{$|V_T| = |V|$}
\STATE $L$ $\leftarrow$ $T$ \hfill $\triangleright$ store spanning tree in $L$ and $S$
\STATE $S$ $\leftarrow$ $S \;\dot\cup\; T$
\ELSE
\STATE $FF$ $\leftarrow$ $[\emptyset]$
\WHILE{$b=0$}
\STATE $e$ $\leftarrow$ $F$(end) \hfill $\triangleright$ pop an edge $e$ from $F$, add it to $T$
\STATE $v$ $\leftarrow$ $e(2)$
\STATE $F$ $\leftarrow$ pop($\{e\}$)
\STATE $T$ $\leftarrow$ $(V_T\;\dot\cup\; \{v\},E_T\;\dot\cup\; \{e\})$
\STATE $F$ $\leftarrow$ $F\setminus\{(u,w)\in E\;|\;u\in T,w=v\}$ \hfill $\triangleright$ update $F$
\STATE $F$ $\leftarrow$ push($\{(u,w)\in E\;|\;u=v,w\notin T\}$)
\STATE $(S,L)$ $\leftarrow$ GROW$(G,T,F,S)$ \hfill $\triangleright$ recurse
\STATE $F$ $\leftarrow$ pop($\{(u,w)\in E\;|\;u=v,w\notin T\}$) \hfill $\triangleright$ restore $F$
\STATE $F$ $\leftarrow$ $F\;\dot\cup\;\{(u,w)\in E\;|\;u\in T,w=v\}$ $\triangleright$ restore in same place as before
\STATE $T$ $\leftarrow$ $(V_T\setminus \{v\},E_T\setminus \{e\})$ $\triangleright$ delete $e$ from $T$ and $G$, add it to $FF$
\STATE $G$ $\leftarrow$ $(V,E\setminus \{e\})$
\STATE $FF$ $\leftarrow$ push($\{e\}$)
\IF{$\{(u,w)\in E\;|\; w=v,\; u \textnormal{ is a nondescendant}$ 
$\textnormal{of }v\textnormal{ in }L\}\neq\emptyset$}
\STATE $b$ $\leftarrow$ 0 \hfill $\triangleright$ bridge test
\ELSE
\STATE $b$ $\leftarrow$ 1
\ENDIF
\ENDWHILE
\WHILE{$FF($end$)\neq\emptyset$}
\STATE $e$ $\leftarrow$ $FF$(end) \hfill $\triangleright$ reconstruct $G$
\STATE $F$ $\leftarrow$ push($\{e\}$)
\STATE $FF$ $\leftarrow$ pop($\{e\}$)
\STATE $G$ $\leftarrow$ $(V,E\;\dot\cup\; \{e\})$
\ENDWHILE
\ENDIF
\end{algorithmic}
\end{algorithm}

\begin{algorithm}
\caption{Enumeration of spanning trees of a directed graph}
\label{alg:enum}
\textbf{Input:} directed graph $G=(V,E)$, root node $r \in V$\\
\textbf{Output:} set of all spanning trees $S$\\
\begin{algorithmic}[1]
\STATE $T$ $\leftarrow$ $(\{r\},\emptyset)$
\STATE $F$ $\leftarrow$ push($\{(u,v)\in E\;|\; u=r\}$)
\STATE $S$ $\leftarrow$ $\emptyset$
\STATE $S$ $\leftarrow$ GROW$(G,T,F,S)$
\end{algorithmic}
\end{algorithm}

To conclude this section, the complexity of the algorithm is stated. For a directed graph~${G=(V,E)}$ that has~$N$ spanning trees, it has a time complexity of $O(|E|N)$ time and a space or memory complexity of $O(|E|)$ (see \cite[Lemma 4, p.285]{gabow78}). Next, it is shown how to use this method in the Pantelides algorithm.

\section{An algorithm that finds all connections}
\label{sec:5}

The enumeration algorithm from the previous section has to be applied to the initial problem of finding all connections. This would replace the computation stated in line 1 of Algorithm 3 from \cite[p.20]{ahrens20}. Thus, transferring the notation, the shifting graph~${G^S=(V^S_E\dot\cup V^S_V,E^S)}$, the exposed equation $F_j\in V^S_E$ and the matching $\mathcal{M}$ are given. It has been shown that finding all connections for $F_j$ with respect to~$\mathcal{M}$ is equivalent to enumerating the spanning trees with root $F_j$ in the connection graph for~$F_j$ with respect to~$\mathcal{M}$.

Therefore, the connection graph $H$ is constructed according to \Cref{def:congraph} and used as input to \Cref{alg:enum}, together with $F_j$ as the root $r$. All spanning trees in $H$ are returned. Given a spanning tree, one can reconstruct the corresponding connection by taking its edges $(F_i,F_\ell)$ and inserting into each directed edge the variable node that was assigned to the equation node $F_\ell$ by $\mathcal{M}$. That yields a set of alternating paths $(F_i,v_k,F_\ell)$ as desired. The method is summarized in \Cref{alg:con}.

\begin{algorithm}
\caption{Find all connections for $F_j$ with respect to~$\mathcal{M}$}
\label{alg:con}
\textbf{Input:} shifting graph $G^S=(V^S_E\dot\cup V^S_V,E^S)$, exposed node $F_j\in V^S_E$, matching $\mathcal{M}$ stored in \code{assign}, \code{colorE}, \code{colorV}\\
\textbf{Output:} set of all connections $P$\\
\begin{algorithmic}[1]
\STATE $C_{F_j}\leftarrow \{ F_k\in V^S_E \;|\; \exists\text{ alternating path between $F_j$}$ 
$\text{and $F_k$ in $G^S$} \}$
\STATE $V_H \leftarrow C_{F_j}\dot\cup \{F_j\}$
\STATE ${E_H \leftarrow \{(F_i,F_\ell)\in V_H \times V_H \;|\; (F_i,v_k,F_\ell) \text{ is an alterna-}}$ ${\text{ting path with } (v_k,F_\ell)\in\mathcal{M},(F_i,v_k)\in E^S\setminus \mathcal{M} \}}$
\STATE $H\;\leftarrow (V_H, E_H) $ \hfill $\triangleright$ construct connection graph
\STATE $P$ $\leftarrow$ Algorithm2($H,F_j$) \hfill $\triangleright$ enumeration algorithm
\FORALL{$T=(V_T,E_T)\in P$}
\STATE $T\leftarrow E_T$ \hfill $\triangleright$ replace spanning trees by connections
\FORALL{$e=(F_i,F_\ell)\in T$}
\STATE $e\leftarrow (F_i,v_k,F_\ell)$ such that $(v_k,F_\ell)\in\mathcal{M}$
\ENDFOR
\ENDFOR
\end{algorithmic}
\end{algorithm}

\noindent To illustrate the new method, a simple and a slightly more complex example are given in the following.

\begin{example}
Consider again the DDAE \cref{eq:example1} from \Cref{ex:con} with the shifting graph \Cref{fig:example1.1}. In \Cref{ex:congraph}, it has been shown how to construct the connection graph for $F_3$ with respect to the matching
\begin{equation*}
    \mathcal{M}=\left\{\{F_1,\{x_1,\dot x_1\}\},\{F_2,\{x_2\}\}\right\}.
\end{equation*}
It is given as $H=(V_H,E_H)$ with
\begin{align*}
    V_H&=\{F_1,F_2,F_3\},\\
    E_H&=\left\{(F_2,F_1),(F_3,F_1),(F_3,F_2)\right\},
\end{align*}
and is visualized in \Cref{fig:example2}.

Hence, one defines $G:=H$ and $r:=F_3$ as the input to \Cref{alg:enum} and enumerates all spanning trees of $G$ rooted at $r$. To initialize the process, set
\begin{align*}
    T &= (\{F_3\},\emptyset),\\
    F &= \left[(F_3,F_2),(F_3,F_1)\right],
\end{align*}
and execute \Cref{alg:grow}.

The recursion process of \Cref{alg:grow} can be visualized by the tree structure in \Cref{fig:example3}. Note that the nodes of the computation tree will be called \textit{bisections} and the edges \textit{arrows} to not confuse them with the nodes and edges of $T$ or $G$. In general, the notion of the tree is as follows: each bisection represents the current subgraph~${T\subseteq G}$, indicated by its edges $E_T$. As described in \cref{sec:4}, one then adds an edge $e\in G$ from the stack $F$ to~$T$ and computes all spanning trees containing $T\cup\{e\}$. Adding an edge $e$ is represented by an arrow pointing away from a bisection, i.e., if
\begin{equation*}
    E_T=\left\{(F_{i_1},F_{\ell_1}),...,(F_{i_m},F_{\ell_m})\right\}
\end{equation*}
and $e=(F_{i_{m+1}},F_{\ell_{m+1}})$ is added, then this is visualized in the computation tree by an arrow pointing from 
\begin{align*}
    &\left\{(F_{i_1},F_{\ell_1}),...,(F_{i_m},F_{\ell_m})\right\}\quad\text{to} \\
    &\left\{(F_{i_1},F_{\ell_1}),...,(F_{i_m},F_{\ell_m}),(F_{i_{m+1}},F_{\ell_{m+1}})\right\}.
\end{align*}
Thus, the computation of all spanning trees containing~$T$, or $T\cup\{e\}$, is represented by the subtree (of the computation tree) rooted at the bisection representing~$T$, or $T\cup\{e\}$, respectively. The arrows pointing away from~$T$ are, from left to right, all edges from the stack~$F$ that are added to $T$. Also, remember that after the computation of all spanning trees containing $T\cup\{e\}$, it has to be checked if $e$ is a bridge. If it is not, $e$ is deleted from~$T$ and $G$. This is depicted by the red arrows pointing to the bisection that represents the addition of the next edge, together with the corresponding label indicating which edge is deleted. If it is a bridge, then all spanning trees containing $T$ have been found and that iteration comes to an end. This is similarly depicted by a red arrow pointing to "END". Finally, each leaf in the lowest level of the computation tree is a complete and unique spanning tree.

\begin{figure}[t!]
    \tikzset{edge from parent/.append style={->,>=stealth}}
    \centering
    \begin{tikzpicture}[every node/.style = {align=center}]
        \tikzstyle{level 1}=[sibling distance=30mm, level distance=17mm]
        \tikzstyle{level 2}=[sibling distance=15mm, level distance=30mm]
        \Tree   [.$(\;\emptyset\;)$
                    [.\node(1){$(F_3,F_1)$};
                        [.\node(4){$(F_3,F_1)$\\$(F_3,F_2)$}; ] ]
                    [.\node(2){$(F_3,F_2)$};
                        \node(5){$(F_3,F_2)$\\$(F_2,F_1)$}; ] ]
        \draw[->,>=stealth,red] (1) to [out=315,in=225,looseness=1] node[pos=.5,below=0mm]{\small{delete}\\\footnotesize{$(F_3,F_1)$}}(2) ;
        \node(3)[right=of 2]{END};
        \draw[->,>=stealth,red] (2) to [out=315,in=225,looseness=1] node[pos=.5,below=0mm]{\footnotesize{$(F_3,F_2)$}\\\small{bridge}}(3) ;
        \node(6)[right=of 4]{END};
        \draw[->,>=stealth,red] (4) to [out=315,in=225,looseness=1] node[pos=.5,below=0mm]{\footnotesize{$(F_3,F_2)$}\\\small{bridge}}(6) ;
        \node(7)[right=of 5]{END};
        \draw[->,>=stealth,red] (5) to [out=315,in=225,looseness=1] node[pos=.5,below=0mm]{\footnotesize{$(F_2,F_1)$}\\\small{bridge}}(7) ;
    \end{tikzpicture}
\label{fig:example3.3}
\caption{Computation tree of \Cref{alg:grow} for the construction of connections for $F_3$ with respect to $\mathcal{M}$ in the shifting step of the DDAE \cref{eq:example1}.}
\label{fig:example3}
\end{figure}

In the case of the present example and as stated above, one starts with $T$ containing no edge ($E_T=\emptyset$) and pops the last element from $F$ to add it to $T$, yielding
\begin{equation}\label{it1}
\begin{aligned}
    e&=(F_3,F_1),\\
    T&=\left(\{F_3,F_1\},\{(F_3,F_1)\}\right),\text{ and}\\
    F&= \left[(F_3,F_2)\right].
\end{aligned}
\end{equation}
As all spanning trees containing $T$ shall be computed, one pops the next edge from $F$, here $(F_3,F_2)$. This results in
\begin{align*}
    e&=(F_3,F_2),\\
    T&=\left(\{F_3,F_1,F_2\},\{(F_3,F_1),(F_3,F_2)\}\right),\text{ and}\\
    F&= [\emptyset].
\end{align*}
The tree $T$ is now a complete spanning tree as it has~$n-1$ (here, $n=3$) edges. Thus, one sets $L=T$ and tests if~${e=(F_3,F_2)}$ is a bridge. The nondescendants of~$F_2$ in $L$ are $F_3$ and $F_1$, and there is no edge in $G$ that goes from a nondescendant of $F_2$ to $F_2$ besides $e$ itself. Consequently, $e$ is categorized as a bridge and indeed, all spanning trees containing the subtree with $E_T=\{(F_3,F_1)\}$ have been computed. The iteration ends and the algorithms returns to the setting of (\ref{it1}). Doing the bridge test here reveals that $e=(F_3,F_1)$ is not a bridge, since~$L$ remains unchanged and there exists the edge $(F_2,F_1)$ in~$G$ where $F_2$ is a nondescendant of $F_1$ in $L$. Therefore, the edge $(F_3,F_1)$ is deleted from $G$ and $T$ and the next iteration begins, meaning that the next edge from $F$ is added to $T$:
\begin{equation}\label{it2}
    \begin{aligned}
    e&=(F_3,F_2),\\
    T&=\left(\{F_3,F_2\},\{(F_3,F_2)\}\right),\text{ and}\\
    F&= \left[(F_2,F_1)\right].
    \end{aligned}
\end{equation}
Again, all spanning trees containing $T$ have to be computed and the next edge is popped from $F$, resulting in
\begin{align*}
    e&=(F_2,F_1),\\
    T&=\left(\{F_3,F_2,F_1\},\{(F_3,F_2),(F_2,F_1)\}\right),\text{ and}\\
    F&= [\emptyset].
\end{align*}
The tree $T$ is a new and distinct spanning tree. One sets~$L=T$ and a test reveals that $e$ is a bridge: $F_3$ is the only nondescendant of $F_1$ in $L$ that does not belong to $e$ itself, and the edge $(F_3,F_1)$ was just deleted from $G$, so it does not exist anymore in the current graph (although it will be restored later). All spanning trees containing the subtree with $E_T=\{(F_3,F_2)\}$ have been found. The current iteration is terminated and the algorithm returns to the iteration with the setting (\ref{it2}). Here, one checks if $e=(F_3,F_2)$ is a bridge and again, it is. The edge $e$ is the only edge leading to $F_2$ in $G$. Hence, this iteration ends as well, which means that all spanning trees containing the subtree with $E_T=\emptyset$ have been computed successfully, or in other words, all spanning existing in $G$. The original graph $G$ is restored (i.e., the edge that was deleted, $(F_3,F_1)$, is added once again to $G$), the whole algorithm terminates and returns
\begin{align*}
    S=\big\{&\big(\{F_3,F_1,F_2\},\{(F_3,F_1),(F_3,F_2)\}\big),\\
    &\big(\{F_3,F_2,F_1\},\{(F_3,F_2),(F_2,F_1)\}\big)\big\}.
\end{align*}
One can easily check by hand that these two spanning trees are the only ones existing in $G$. 

Finally, the result, still in tree structure, has to be converted back to a set of connections. Inserting the variable nodes stored in the matching $\mathcal{M}$ gives
\begin{align*}
    P=\big\{&\big\{(F_3,\{x_1,\dot x_1\},F_1),(F_3,\{x_2\},F_2)\big\},\\
    &\big\{(F_3,\{x_2\},F_2),(F_2,\{x_1,\dot x_1\},F_1)\big\}\big\}.
\end{align*}
By comparing $P$ to \Cref{fig:example1.2} and \Cref{fig:example1.3}, it can be seen that the algorithm successfully determined all desired connections for $F_3$ with respect to $\mathcal{M}$.
\end{example}

\begin{example}
Consider the DDAE
\begin{equation}\label{eq:example2}
\begin{aligned}
    \dot x_1 &= x_2+x_3,\\
    \dot x_2 &= x_3+ \Delta_{-\tau} x_2,\\
    \dot x_3 &= x_2+ \Delta_{-\tau} x_3,\\
    0 &= x_1 +x_2+x_3+\Delta_{-\tau} x_4.
\end{aligned}
\end{equation}
The shifting graph, after assigning $\{x_1,\dot x_1\}$ to~$F_1$,~$\{x_2,\dot x_2\}$ to $F_2$ and $\{x_3,\dot x_3\}$ to $F_3$, is shown in \Cref{fig:example4.1}. The equation $F_4$ is exposed and cannot be matched directly to any equivalence class, but it is connected via alternating paths to all other equation nodes. Therefore, it holds that ${C_{F_4}=\{F_1,F_2,F_3\}}$ and all possible connections for~$F_4$ with respect to
\begin{equation*}
    \mathcal{M}=\left\{\{F_1,\{x_1,\dot x_1\}\},\{F_2,\{x_2,\dot x_2\}\right\}, \{F_3,\{x_3,\dot x_3\}\}\}
\end{equation*}
have to be found.
The connection graph $H=(V_H,E_H)$ for $F_4$ with respect to $\mathcal{M}$ is given in \Cref{fig:example4.2} with
\begin{align*}
    V_H=\{&F_1,F_2,F_3,F_4\},\\
    E_H=\big\{&(F_1,F_2),(F_1,F_3),(F_2,F_3),(F_3,F_2),(F_4,F_1),(F_4,F_2),\\
    &(F_4,F_3)\big\}.
\end{align*}
After defining $G:=H$ and $r:=F_4$ as the input to \Cref{alg:enum} and initializing
\begin{align*}
    T &= (\{F_4\},\emptyset), \text{ and}\\
    F &= \left[(F_4,F_3),(F_4,F_2),(F_4,F_1)\right],
\end{align*}
\Cref{alg:grow} is executed to enumerate all spanning trees of $G$ rooted at $r$.

The computation tree that represents the recursion structure of \Cref{alg:grow} can be seen in \Cref{fig:example5}. Similarly to the last example, one can follow the different paths in the tree to retrace the construction of subtrees~$T$ by addition and deletion of edges $e$.

Note that even though $F$ is initialized with all three edges outgoing from $F_4$, the algorithm already terminates after the first iteration where all spanning trees containing the subtree with $E_T=\{(F_4,F_1)\}$ are com\-put\-ed. This is due to the fact that after deleting $(F_4,F_1)$ from~$G$, there is no edge leading to $F_1$ anymore, and hence it is not possible to construct another spanning tree. The following eight spanning trees are returned:
\begin{align*}
    S=\big\{&\left(\{F_4,F_1,F_2,F_3\},\{(F_4,F_1),(F_1,F_2),(F_2,F_3)\}\right),\\
    &\left(\{F_4,F_1,F_2,F_3\},\{(F_4,F_1),(F_1,F_2),(F_1,F_3)\}\right),\\
    &\left(\{F_4,F_1,F_2,F_3\},\{(F_4,F_1),(F_1,F_2),(F_4,F_3)\}\right),\\
    &\left(\{F_4,F_1,F_3,F_2\},\{(F_4,F_1),(F_1,F_3),(F_3,F_2)\}\right),\\
    &\left(\{F_4,F_1,F_3,F_2\},\{(F_4,F_1),(F_1,F_3),(F_4,F_2)\}\right),\\
    &\left(\{F_4,F_1,F_2,F_3\},\{(F_4,F_1),(F_4,F_2),(F_2,F_3)\}\right),\\
    &\left(\{F_4,F_1,F_2,F_3\},\{(F_4,F_1),(F_4,F_2),(F_4,F_3)\}\right),\\
    &\left(\{F_4,F_1,F_3,F_2\},\{(F_4,F_1),(F_4,F_3),(F_3,F_2)\}\right)\big\}.
\end{align*}
After converting them into connections with respect to the matching $\mathcal{M}$, one finally obtains
\begin{align*}
    P=\big\{&\left\{(F_4,\{x_1,\dot x_1\},F_1),(F_1,\{x_2,\dot x_2\},F_2),(F_2,\{x_3,\dot x_3\},F_3)\right\},\\
    &\left\{(F_4,\{x_1,\dot x_1\},F_1),(F_1,\{x_2,\dot x_2\},F_2),(F_1,\{x_3,\dot x_3\},F_3)\right\},\\
    &\left\{(F_4,\{x_1,\dot x_1\},F_1),(F_1,\{x_2,\dot x_2\},F_2),(F_4,\{x_3,\dot x_3\},F_3)\right\},\\
    &\left\{(F_4,\{x_1,\dot x_1\},F_1),(F_1,\{x_3,\dot x_3\},F_3),(F_3,\{x_2,\dot x_2\},F_2)\right\},\\
    &\left\{(F_4,\{x_1,\dot x_1\},F_1),(F_1,\{x_3,\dot x_3\},F_3),(F_4,\{x_2,\dot x_2\},F_2)\right\},\\
    &\left\{(F_4,\{x_1,\dot x_1\},F_1),(F_4,\{x_2,\dot x_2\},F_2),(F_2,\{x_3,\dot x_3\},F_3)\right\},\\
    &\left\{(F_4,\{x_1,\dot x_1\},F_1),(F_4,\{x_2,\dot x_2\},F_2),(F_4,\{x_3,\dot x_3\},F_3)\right\},\\
    &\left\{(F_4,\{x_1,\dot x_1\},F_1),(F_4,\{x_3,\dot x_3\},F_3),(F_3,\{x_2,\dot x_2\},F_2)\right\}\big\}\!.
\end{align*}
Indeed, all possible connections for $F_4$ with respect to~$\mathcal{M}$ have been found.
\end{example}

\begin{figure}[t!]
    \centering
    \subfigure[The shifting graph of \cref{eq:example2}.]{
        \centering
        \begin{tikzpicture} 
    		\enode[try]{1}{$F_1$}
    		\enode[try]{2}{$F_2$}
    		\enode[try]{3}{$F_3$}
    		\enode[try]{4}{$F_4$}
    		\vnode{1}{$x_1,\dot x_1$}{}
    		\vnode{2}{$x_2,\dot x_2$}{}
    		\vnode{3}{$x_3,\dot x_3$}{}
    		\vnode[low]{4}{$\Delta_{-\tau} x_2$}{}
    		\vnode[low]{5}{$\Delta_{-\tau} x_3$}{}
    		\vnode[low]{6}{$\Delta_{-\tau} x_4$}{}
    		\edge[assign]{e1}{v1}
    		\edge[assign]{e2}{v2}
    		\edge[assign]{e3}{v3}
    		\edge{e1}{v2}
    		\edge{e1}{v3}
    		\edge{e2}{v3}
    		\edge{e3}{v2}
    		\edge{e4}{v1}
    		\edge{e4}{v2}
    		\edge{e4}{v3}
    		\edge[low]{e2}{v4}
    		\edge[low]{e3}{v5}
    		\edge[low]{e4}{v6}
        \end{tikzpicture}
        \label{fig:example4.1}
    }
    \hspace{2cm}
    \subfigure[The connection graph for $F_4$.]{
        \centering
        \begin{tikzpicture}[node distance={20mm}, roundnode/.style={circle, draw=black, thin}]
    	    \node[roundnode](4){$F_4$};
    	    \node[roundnode](1)[right=of 4]{$F_1$};
    	    \node[roundnode](2)[below=of 1]{$F_2$};
    	    \node[roundnode](3)[below=of 4]{$F_3$};
    	    \draw[->,>=stealth,thick] (4) -- (1);
    	    \draw[->,>=stealth,thick] (4) -- (2);
    	    \draw[->,>=stealth,thick] (4) -- (3);
    	    \draw[->,>=stealth,thick] (1) -- (2);
    	    \draw[->,>=stealth,thick] (1) -- (3);
    	    \draw[->,>=stealth,thick] (2) to [out=170,in=10,looseness=0.5] (3);
    	    \draw[->,>=stealth,thick] (3) to [out=350,in=190,looseness=0.5] (2);
    	\end{tikzpicture}
        \label{fig:example4.2}
    }
	\caption{Visualization of the construction of connections for $F_4$ with respect to $\mathcal{M}$ in the shifting step of the DDAE \cref{eq:example2}.}
	\label{fig:example4}
\end{figure}

\begin{sidewaysfigure}[p!]
    \centering
    \begin{tikzpicture}[every node/.style = {align=center}]
        \tikzset{edge from parent/.append style={->,>=stealth}}
        \tikzstyle{level 1}=[level distance=25mm]
        \tikzstyle{level 2}=[sibling distance=15mm, level distance=30mm]
        \tikzstyle{level 3}=[sibling distance=3mm, level distance=40mm]
        \Tree   [.$(\;\emptyset\;)$
                    [.\node(6){$(F_4,F_1)$};
                        [.\node(1){$(F_4,F_1)$\\$(F_1,F_2)$}; 
                            [.\node(8){$(F_4,F_1)$\\$(F_1,F_2)$\\$(F_2,F_3)$}; ]
                            [.\node(9){$(F_4,F_1)$\\$(F_1,F_2)$\\$(F_1,F_3)$}; ]
                            [.\node(10){$(F_4,F_1)$\\$(F_1,F_2)$\\$(F_4,F_3)$}; ] ]
                        [.\node(2){$(F_4,F_1)$\\$(F_1,F_3)$};
                            [.\node(11){$(F_4,F_1)$\\$(F_1,F_3)$\\$(F_3,F_2)$}; ]
                            [.\node(12){$(F_4,F_1)$\\$(F_1,F_3)$\\$(F_4,F_2)$}; ] ]
                        [.\node(3){$(F_4,F_1)$\\$(F_4,F_2)$}; 
                            [.\node(13){$(F_4,F_1)$\\$(F_4,F_2)$\\$(F_2,F_3)$}; ]
                            [.\node(14){$(F_4,F_1)$\\$(F_4,F_2)$\\$(F_4,F_3)$}; ] ]
                        [.\node(4){$(F_4,F_1)$\\$(F_4,F_3)$};
                            [.\node(15){$(F_4,F_1)$\\$(F_4,F_3)$\\$(F_3,F_2)$}; ] ] ] ]
        \draw[->,>=stealth,red] (1) to [out=315,in=225,looseness=1] node[pos=.5,below=0mm]{\small{delete}\\\footnotesize{$(F_1,F_2)$}}(2) ;
        \draw[->,>=stealth,red] (2) to [out=315,in=225,looseness=1] node[pos=.5,below=0mm]{\small{delete}\\\footnotesize{$(F_1,F_3)$}}(3) ;
        \draw[->,>=stealth,red] (3) to [out=315,in=225,looseness=1] node[pos=.5,below=0mm]{\small{delete}\\\footnotesize{$(F_4,F_2)$}}(4) ;
        \node(5)[below right=-6mm and 2mm of 4]{\footnotesize{END}};
        \draw[->,>=stealth,red] (4) to [out=315,in=225,looseness=1] node[pos=.5,below=0mm]{\footnotesize{$(F_4,F_3)$}\\\small{bridge}}(5) ;
        \node(7)[right=of 6]{\footnotesize{END}};
        \draw[->,>=stealth,red] (6) to [out=45,in=135,looseness=1] node[pos=.5,above=0mm]{\footnotesize{$(F_4,F_1)$}\\\small{bridge}}(7) ;
        \draw[->,>=stealth,red] (8) to [out=315,in=225,looseness=1] node[pos=.5,below=0mm]{\small{delete}\\\footnotesize{$(F_2,F_3)$}}(9) ;
        \draw[->,>=stealth,red] (9) to [out=315,in=225,looseness=1] node[pos=.5,below=0mm]{\small{delete}\\\footnotesize{$(F_1,F_3)$}}(10) ;
        \draw[->,>=stealth,red] (11) to [out=315,in=225,looseness=1] node[pos=.5,below=0mm]{\small{delete}\\\footnotesize{$(F_3,F_2)$}}(12) ;
        \draw[->,>=stealth,red] (13) to [out=315,in=225,looseness=1] node[pos=.5,below=0mm]{\small{delete}\\\footnotesize{$(F_2,F_3)$}}(14) ;
        \node(16)[below right=-6mm and 2mm of 10]{\footnotesize{END}};
        \draw[->,>=stealth,red] (10) to [out=315,in=225,looseness=1] node[pos=.5,below=0mm]{\footnotesize{$(F_4,F_3)$}\\\small{bridge}}(16) ;
        \node(17)[below right=-6mm and 2mm of 12]{\footnotesize{END}};
        \draw[->,>=stealth,red] (12) to [out=315,in=225,looseness=1] node[pos=.5,below=0mm]{\footnotesize{$(F_4,F_2)$}\\\small{bridge}}(17) ;
        \node(18)[below right=-6mm and 2mm of 14]{\footnotesize{END}};
        \draw[->,>=stealth,red] (14) to [out=315,in=225,looseness=1] node[pos=.5,below=0mm]{\footnotesize{$(F_4,F_3)$}\\\small{bridge}}(18) ;
        \node(19)[below right=-6mm and 2mm of 15]{\footnotesize{END}};
        \draw[->,>=stealth,red] (15) to [out=315,in=225,looseness=1] node[pos=.5,below=0mm]{\footnotesize{$(F_3,F_2)$}\\\small{bridge}}(19) ;
    \end{tikzpicture}
    \caption{Computation tree of \Cref{alg:grow} for the DDAE \cref{eq:example2}.}
    \label{fig:example5}
\end{sidewaysfigure}

\section{Numerical demonstration}
\label{sec:6}

The developed algorithm presented in this paper has been implemented to empirically demonstrate its effectiveness. Also, a naive depth-first method is used to compute connections in order to estimate the efficiency of the new algorithm in terms of computational complexity. All computations are performed using MATLAB R2021a on a laptop with the processor Intel CORE i5-6267U CPU @2.90GHz (4 CPUs), $\sim$2.8GHz.

For simplicity, a shifting graph $G^S=(V_E^S\dot\cup V_V^S,E^S)$ is assumed to be given where only the variable nodes $v_k\in V_V^S$ of highest shift exist for $k=1,...,n-1$ (i.e., all other variable nodes have already been deleted) and each $F_i\in V_E^S$ is matched to $v_i$, for $i=1,...,n-1$. Thus, $F_n$ is exposed with respect to the matching
\begin{equation*}
    \mathcal{M}=\left\{\{F_1,v_1\},...,\{F_{n-1},v_{n-1}\}\right\}.
\end{equation*}
Three different scenarios are tested. To illustrate the edge structures $E^S$ of the corresponding shifting graphs, let $A\in\mathbb{R}^{n\times (n-1)}$ be a matrix with entries 
\begin{equation*}
    a_{ij}=
    \begin{cases}
    1,\quad\text{if } \{F_i,v_j\}\in E^S,\\
    0,\quad\text{else}.
    \end{cases}
\end{equation*}

First, a shifting graph is constructed such that 
\begin{equation}
    \label{eq:scenario1}
    A=
    \begin{bmatrix}
    1 & 1      &        &  \\
    1 & \ddots & \ddots &  \\
      & \ddots & \ddots & 1\\
      &        & 1      & 1\\
    1 & \cdots & \cdots & 1 
    \end{bmatrix}
    ,
\end{equation}
i.e., each equation node $F_i$, for $i=1,...,n-1$, is connected to at most three variable nodes and $F_n$ is connected to each $v_k$, for $k=1,...,n-1$. The computation times for shifting graphs of this structure for different~${n\in\mathbb{N}}$ can be seen in \Cref{tab:scenario1}. In all tables, "DFS" is the abbreviation for "depth-first search" and $N$ denotes the number of possible connections. Some computations have been stopped after 10 minutes of computing time, which is indicated by "$>600$". In these cases, computations for even higher $n$ have not been executed. This is marked as "-" in the tables.

\begin{table}[htb]
	\centering
	\caption{Computation times in [s] for scenario \cref{eq:scenario1}.}
	\label{tab:scenario1}
	\begin{tabular}{|c| c c c c c c|} \hline
		$n$ & 5 & 6 & 7 & 8 & 9 & 10 \\
		\hline
		DFS                & 0.01 & 0.11 & 8.3  & $>$600 & -    & -  \\
		Alg. \ref{alg:con} & 0.02 & 0.04 & 0.06 & 0.16   & 0.43 & 1.32\\ 
		$N$                & 21    & 55   & 144 & 377    & 987  & 2584\\
		\hline
	\end{tabular}
\end{table}

In a second test, a scenario is created such that 
\begin{equation}
    \label{eq:scenario2}
    A=
    \begin{bmatrix}
    1 & \cdots & 1 \\
      & \ddots & \vdots\\
      &        & 1 \\
    1 & \cdots & 1 
    \end{bmatrix}
    ,
\end{equation}
i.e., each equation node $F_i$, for $i=1,...,n-1$, is connected to $n-i$ variable nodes and $F_n$ is again connected to each $v_k$, for $k=1,...,n-1$. The computation times for shifting graphs of this structure can be seen in \Cref{tab:scenario2}.
\begin{table}[htb]
	\centering
	\caption{Computation times in [s] for scenario \cref{eq:scenario2}.}
	\label{tab:scenario2}
	\begin{tabular}{|c|c c c c c c|} \hline
		$n$ & 5 & 6 & 7 & 8 & 9 & 10 \\
		\hline
		DFS                & 0.01 & 0.22 & 31   & $>$600 & -    & -  \\
		Alg. \ref{alg:con} & 0.04 & 0.09 & 0.39 & 2.2    & 14   & 126\\ 
		$N$                & 24   & 120  & 720  & 5040   & 40320 & 362880\\
		\hline
	\end{tabular}
\end{table}

For the third scenario, a complete graph is assumed, where each equation node is connected to all variable nodes, i.e.,
\begin{equation}
    \label{eq:scenario3}
    A=
    \begin{bmatrix}
    1 & \cdots & 1 \\
    \vdots &  & \vdots\\
    1 & \cdots & 1 
    \end{bmatrix}
    .
\end{equation}
The computation times are listed in \Cref{tab:scenario3}.
\begin{table}[htb]
	\centering
	\caption{Computation times in [s] for scenario \cref{eq:scenario3}.}
	\label{tab:scenario3}
	\begin{tabular}{|c | c c c c c|} \hline
		$n$ & 5 & 6 & 7 & 8 & 9 \\
		\hline
		DFS                & 0.03 & 0.41 & 318    & $>$600 & -  \\
		Alg. \ref{alg:con} & 0.05 & 0.48 & 6.3    & 88     & 2462  \\ 
		$N$                & 125  & 1296 & 16807  & 262144 & 4782969\\
		\hline
	\end{tabular}
\end{table}

The results clearly show the advantage of \Cref{alg:con} as it is strongly superior in terms of computation time. For all scenarios, the depth-first search algorithm is only competitive for very small system sizes~$n$, before its computation time suddenly explodes. This has a simple reason: by naively testing all possible combinations of edges, an extreme amount of possibilities arises. Even more, the majority of connections computed by the depth-first search algorithm are duplicates, meaning that they possess the same alternating paths in different order. All of these have to be identified and deleted after the algorithm terminates. \Cref{alg:con}, however, does not have this problem, as only unique spanning trees (and thus, connections) are computed. Therefore, it scales well with the number of possible connections $N$ and has a huge advantage in terms of computational complexity. Nevertheless, one can also see that the problem itself is very demanding, because~$N$ increases rapidly with the system size $n$ and already for relatively small $n$, one cannot compute all connections in a reasonable time anymore. There are just too many in the case of dense graphs.

\section{Conclusion}
\label{sec:7}

In this work, the problem of finding all connections in the shifting step of the Pantelides algorithm for DDAEs from \cite{ahrens20} has been discussed. A new method, based on on the reformulation of the problem into the problem of enumerating all spanning trees (or arborescences) in a directed graph, has been developed. This directed graph is constructed with the alternating paths of the shifting graph and is called connection graph. The equivalence of the solutions to these two problems has been proven in \Cref{thm:con}. That led to the possibility to exploit the fact that there already exist efficient methods to solve the enumeration problem. By introducing and implementing the method from \cite{gabow78}, \Cref{alg:con} has been introduced to compute all connections in the shifting graph. Its effectiveness for the problem at hand has been shown by giving theoretical examples and its efficiency has been demonstrated by an implementation and numerical tests.

In summary, the lack of a satisfactory solution to the problem of finding all connections in the shifting step of the Pantelides algorithm for DDAEs has been overcome by this work for small problems. The new method now provides an efficient algorithm for its solution and will hopefully help to solve many DDAEs in the future.

\appendix
\section{Code} 
The MATLAB source code of the implementation used to compute the presented results is available as supplementary material and can be obtained under
	\begin{center}
		\href{https://github.com/DanielCollin96/pantelides\_ddae\_connections}{https://github.com/DanielCollin96/pantelides\_ddae\_connections}.
	\end{center}

\section*{Acknowledgments}
The author thanks his supervisors Ines Ahrens (Technische Universität Berlin), Benjamin Unger (Universität Stuttgart) and Volker Mehr\-mann (Technische Universität Berlin) for their help, valuable tips and the encouragement to publish this paper. His work is supported by the DFG Collaborative Research Center 910 \textit{Control of self-organizing nonlinear systems: Theoretical methods and concepts of application},
project number 163436311.

\bibliographystyle{siamplain}
\bibliography{references}

\begin{thebibliography}{10}

\bibitem{ahrens20}
{\sc I.~Ahrens and B.~Unger}, {\em The {P}antelides algorithm for delay
  differential-algebraic equations}, Trans. Math. Appl., 4 (2020), pp.~1--36,
  \url{https://doi.org/10.1093/imatrm/tnaa003},
  \url{https://doi.org/10.1093/imatrm/tnaa003}.

\bibitem{ascher95}
{\sc U.~Ascher and L.~Petzold}, {\em The numerical solution of
  delay-differential-algebraic equations of retarded and neutral type}, SIAM J.
  Numer. Anal., 32 (1995), pp.~1635--1657,
  \url{https://doi.org/10.1137/0732073}, \url{https://doi.org/10.1137/0732073}.

\bibitem{bellen03}
{\sc A.~Bellen and M.~Zennaro}, {\em Numerical methods for delay differential
  equations}, Numerical Mathematics and Scientific Computation, The Clarendon
  Press, Oxford University Press, New York, 2003,
  \url{https://doi.org/10.1093/acprof:oso/9780198506546.001.0001}.

\bibitem{campbell80}
{\sc S.~Campbell}, {\em Singular linear systems of differential equations with
  delays}, Appl. Anal., 11 (1980), pp.~129--136,
  \url{https://doi.org/10.1080/00036818008839326},
  \url{https://doi.org/10.1080/00036818008839326}.

\bibitem{campbell95}
{\sc S.~Campbell}, {\em Nonregular {$2$}{D} descriptor delay systems}, IMA J.
  Math. Control Inform., 12 (1995), pp.~57--67,
  \url{https://doi.org/10.1093/imamci/12.1.57},
  \url{https://doi.org/10.1093/imamci/12.1.57}.

\bibitem{gabow78}
{\sc H.~Gabow and E.~Myers}, {\em Finding all spanning trees of directed and
  undirected graphs}, SIAM J. Comput., 7 (1978), pp.~280--287,
  \url{https://doi.org/10.1137/0207024}.

\bibitem{ph}
{\sc P.~Ha}, {\em Analysis and numerical solutions of delay
  differential-algebraic equations}, doctoral thesis, Technische Universität
  Berlin, Berlin, 2015, \url{https://doi.org/10.14279/depositonce-4385}.

\bibitem{phi18}
{\sc P.~Ha}, {\em Spectral characterizations of solvability and stability for
  delay differential-algebraic equations}, Acta Math. Vietnam., 43 (2018),
  pp.~715--735, \url{https://doi.org/10.1007/s40306-018-0279-7},
  \url{https://doi.org/10.1007/s40306-018-0279-7}.

\bibitem{phi12}
{\sc P.~Ha and V.~Mehrmann}, {\em Analysis and reformulation of linear delay
  differential-algebraic equations}, Electron. J. Linear Algebra, 23 (2012),
  pp.~703--730, \url{https://doi.org/10.13001/1081-3810.1552},
  \url{https://doi.org/10.13001/1081-3810.1552}.

\bibitem{phi16}
{\sc P.~Ha and V.~Mehrmann}, {\em Analysis and numerical solution of linear
  delay differential-algebraic equations}, BIT, 56 (2016), pp.~633--657,
  \url{https://doi.org/10.1007/s10543-015-0577-6},
  \url{https://doi.org/10.1007/s10543-015-0577-6}.

\bibitem{phi14}
{\sc P.~Ha, V.~Mehrmann, and A.~Steinbrecher}, {\em Analysis of linear variable
  coefficient delay differential-algebraic equations}, J. Dynam. Differential
  Equations, 26 (2014), pp.~889--914,
  \url{https://doi.org/10.1007/s10884-014-9386-x},
  \url{https://doi.org/10.1007/s10884-014-9386-x}.

\bibitem{algmath}
{\sc S.~Hougardy and J.~Vygen}, {\em Algorithmic Mathematics}, Springer
  International Publishing, 2016,
  \url{https://doi.org/10.1007/978-3-319-39558-6}.

\bibitem{pantelides88}
{\sc C.~Pantelides}, {\em The consistent initialization of
  differential-algebraic systems}, SIAM J. Sci. Statist. Comput., 9 (1988),
  pp.~213--231, \url{https://doi.org/10.1137/0909014},
  \url{https://doi.org/10.1137/0909014}.

\bibitem{trenn19}
{\sc S.~Trenn and B.~Unger}, {\em Delay regularity of differential-algebraic
  equations}, in Proc. 58th IEEE Conf. Decision Control (CDC) 2019, Nice,
  France, 2019, pp.~989--994,
  \url{https://doi.org/10.1109/CDC40024.2019.9030146}.

\bibitem{unger18}
{\sc B.~Unger}, {\em Discontinuity propagation in delay differential-algebraic
  equations}, Electron. J. Linear Algebra, 34 (2018), pp.~582--601,
  \url{https://doi.org/10.13001/1081-3810.3759},
  \url{https://doi.org/10.13001/1081-3810.3759}.

\bibitem{unger20}
{\sc B.~{Unger}}, {\em Delay differential-algebraic equations in real-time
  dynamic substructuring}, ArXiv e-print 2003.10195,  (2020),
  \url{https://arxiv.org/abs/2003.10195}.

\bibitem{unger20b}
{\sc B.~Unger}, {\em {Well-Posedness and Realization Theory for Delay
  Differential-Algebraic Equations}}, dissertation, Technische
  Universit{\"{a}}t Berlin, 2020,
  \url{https://doi.org/10.14279/depositonce-10707}.

\end{thebibliography}
\end{document}